\definecolor{black}{rgb}{0.0, 0.0, 0.0}
\definecolor{red}{rgb}{1.0, 0.5, 0.5}
\newcommand{\margnote}[1]{
\ifthenelse{\boolean{shownotes}}%
{\marginpar{\raggedright\tiny\texttt{#1}}}%
{}%
}
\newcommand{\hole}[1]{
\ifthenelse{\boolean{shownotes}}%
{\begin{center} \fbox{ \rule {.25cm}{0cm} \rule[-.1cm]{0cm}{.4cm}
\parbox{.85\textwidth}{\begin{center} \texttt{#1}\end{center}} \rule
{.25cm}{0cm}}\end{center}} {} }
\title[1D Eulerian dynamics with singular
  interaction forces]{Global regularity for 1D Eulerian dynamics with singular
  interaction forces}
\author[Alexander Kiselev]{Alexander Kiselev}
\address[Alexander Kiselev]{\newline Department of Mathematics, \ Duke
  University, 120 Science Drive, Durham, NC 27708, USA}
\email{kiselev@math.duke.edu}
\author[Changhui Tan]{Changhui Tan}
\address[Changhui Tan]{\newline Department of Mathematics, \ Rice
  University, 6100 Main St., Houston, TX 77005, USA}
\email{ctan@rice.edu}
\newtheorem{theorem}{Theorem}[section]
\newtheorem{lemma}{Lemma}[section]
\newtheorem{remark}{Remark}[section]
\newcommand{\R}{\mathbb R}
\newcommand{\T}{\mathbb T}
\newcommand{\pa}{\partial}
\begin{document}
\allowdisplaybreaks

\date{\today}


\begin{abstract}
The Euler-Poisson-Alignment (EPA) system appears in mathematical biology and is used to model, in a hydrodynamic limit, a set agents interacting through
mutual attraction/repulsion as well as alignment forces. We consider one-dimensional EPA system with a class of singular alignment terms as well as natural
attraction/repulsion terms. The singularity of the alignment kernel produces an interesting effect regularizing the solutions of the equation and
leading to global regularity for wide range of initial data. This was recently observed in \cite{do2017global}. Our goal in this paper is to generalize the
result and to incorporate the attractive/repulsive potential. We prove that global regularity persists for these more general models.
\end{abstract}

\maketitle \centerline{\date}
\tableofcontents

\section{Introduction and statement of main results}\label{sec1}
We consider the following 1D system of pressureless Euler equations
with nonlocal interaction forces
\begin{align}
&\pa_t \rho + \pa_x (\rho u) = 0,\quad x \in \R, \quad t > 0, \label{eq:mainrho} \\
&\,\,\,\, \pa_t u + u\pa_x u =  \int_{\R} \psi(x-y)(u(y,t) - u(x,t))\rho(y,t)dy
- \pa_x K \star \rho,\label{eq:mainu}
\end{align}
subject to initial density and velocity
\begin{equation}\label{eq:maininitial}
(\rho(\cdot,t),u(\cdot,t))|_{t=0}= (\rho_0, u_0).
\end{equation}

The term on the right hand side of \eqref{eq:mainu} consists of two
parts: an \emph{alignment} interaction with communication weight
$\psi$, and an \emph{attraction-repulsion} interaction through a
potential $K$.

\subsection{Self-organized dynamics with three-zone
  interactions}\label{subsec:intro:3zone}
System \eqref{eq:mainrho}-\eqref{eq:mainu} arises from many contexts
in mathematical physics and biology. In particular, it serves as a
macroscopic system in modeling collective behaviors of complex
biological systems. The corresponding agent-based model has the form
\begin{equation}\label{eq:ABM}
\dot{x}_i=v_i, \quad
m\dot{v}_i=\frac{1}{N}\sum_{j=1}^N\psi(x_i-x_j)(v_j-v_i)-
\frac{1}{N}\sum_{j=1}^N\nabla_{x_i}K(x_i-x_j),
\end{equation}
where $(x_i, v_i)_{i=1}^N$ represent the position and velocity of agent
$i$. The dynamics is governed by Newton's second law, with the
interaction force modeled under a celebrated ``three-zone'' framework
proposed in \cite{reynolds1987flocks}, including
long-range attraction, short-range repulsion, and mid-range alignment.

The first part of the force describes the alignment interaction, where
$\psi$ characterizes the strength of the velocity alignment between
two agents. Naturally, it is a decreasing function of the distance
between agents. Such alignment force has been proposed by Cucker and Smale
in \cite{cucker2007emergent}. The corresponding dynamics enjoys the
flocking property \cite{ha2009simple}, which is a common phenomenon
observed in animal groups.

The second part of the force represents the attraction-repulsion
interaction. The sign of the force $-\nabla K$ determines whether the
interaction is attractive or repulsive. This type of potential driven
interaction force is widely considered in many physical and biological
models, e.g. \cite{d2006self,mogilner1999non}.

Starting from the agent-based model \eqref{eq:ABM}, one can derive a
kinetic representation of the system that describes the mean-field
behavior as $N\to\infty$, see \cite{carrillo2010asymptotic, ha2008particle,
tan2017discontinuous}. Then, a variety of hydrodynamics limits can be
obtained that capture the macroscopic behaviors in different regimes
\cite{fetecau2016first, karper2015hydrodynamic, poyato2017euler}. In
particular, if we consider the mono-kinetic regime, the corresponding
macroscopic system becomes \eqref{eq:mainrho}-\eqref{eq:mainu}.

\subsection{Global regularity versus finite time blowup}
We are interested in the global existence and regularity for the
solution of the system \eqref{eq:mainrho}-\eqref{eq:mainu}.

Let us start with the case with no interaction forces, namely
$\psi=K\equiv0$. The system can be recognized as the pressureless Euler system. In
particular, \eqref{eq:mainu} becomes the classical inviscid Burgers
equation, where smooth data forms shock discontinuity in finite
time due to nonlinear convection $u\pa_xu$.
Together with \eqref{eq:mainrho}, it is well-known that the
solution generates singular shocks in finite time:
$\rho(x,t)\to\infty$ at the position and time when shock occurs.

With alignment force $\psi\geq0$ and $K\equiv0$, the system is called
the \emph{Euler-Alignment system}. When $\psi$ is Lipschitz, the system has been
studied in \cite{carrillo2016critical, tadmor2014critical}, where it is
discovered that the alignment force tends to regularize the solution
and prevent finite time blowup, but only for some initial data. This is so
called \emph{critical threshold phenomenon}: for subcritical initial
data, the alignment force beats the nonlinear convection, and the
solution is globally regular; while for supercritical initial data,
the convection wins and the solution admits a finite time blowup.

Another interesting and natural setting is when $\psi$ is singular, taking the
form
\begin{equation}\label{eq:singularpsi}
\psi(x)=\frac{c_\alpha}{|x|^{1+\alpha}},\quad\alpha>0,
\end{equation}
with $c_\alpha$ be a positive constant. The range $0 <\alpha \leq 2$ is most natural,
and the case $0 < \alpha <1$ is most interesting for the reasons explained later in this sub-section.
The Euler-Alignment system corresponding to the choice \eqref{eq:singularpsi} is studied in
\cite{do2017global} for the periodic case.

Without loss of generality, we set the scale and let $\T=[-1/2,1/2]$ be the periodic domain
of size 1. The singular alignment force can be equivalently expressed as
\begin{equation}\label{eq:singularforce}
\int_\T\psi_\alpha(y)(u(x+y,t)-u(x,t))\rho(x+y,t)dy,
\end{equation}
with the periodic influence function $\psi_\alpha$ defined as
\begin{equation}\label{eq:psip}
\psi_\alpha(x)=\sum_{m\in\mathbb{Z}}\frac{c_\alpha}{|x+m|^{1+\alpha}},
\quad\forall~x\in\T\backslash\{0\}.
\end{equation}
Clearly, $\psi_\alpha$ is singular at $x=0$. Moreover, it has a positive
lower bound
\begin{equation}\label{eq:psim}
\psi_m=\psi_m(\alpha):=\min_{x\in\T}\psi_\alpha(x)=\psi_\alpha\left(\frac{1}{2}\right)>0.
\end{equation}

This leads to the following fractional Euler-Alignment system
\begin{align}
&\pa_t \rho + \pa_x (\rho u) = 0,\qquad x\in\T,\label{eq:fEArho}\\
&\pa_t u + u\pa_x u=\int_\T\psi_\alpha(y)(u(x+y,t)-u(x,t))\rho(x+y,t)dy.
\label{eq:fEAu}
\end{align}
It is shown in \cite{do2017global} that system
\eqref{eq:fEArho}-\eqref{eq:fEAu} has a global smooth solution for all
smooth initial data with $\rho_0>0$. This result is most interesting for the case $0<\alpha<1:$
if we set $\rho \equiv 1$ in \eqref{eq:fEAu}, we get Burgers equation with fractional dissipation.
It is well-known that in this case, there exist initial data leading to finite time blow up (when
$0<\alpha <1;$ the $1 \leq \alpha \leq 2$ range leads to global regularity). However, it turns out
that in the nonlinear
disspation/allignment case described by \eqref{eq:fEAu},
the singularity in the influence function and density modulation
dominate the nonlinaer convection, for all initial data. This also contrasts with the case of Lischitz regular influence function
$\psi,$ where one has critical threshold in the phase space separating initial data leading to finite time blow up
and to global regularity.

\subsection{Euler-Poisson-Alignment system}\label{subsec:intro:EAP}
Now, we take into account the attraction-repulsion force, namely
$K\not\equiv0$. We shall begin with a particular potential
\begin{equation}\label{eq:NewtonianR}
\mathcal{N}(x)=\frac{k|x|}{2}.
\end{equation}
The potential is the \emph{1D Newtonian potential}, and it is the
kernel for the 1D Poisson equation, namely
\[\pa_x^2\mathcal{N}\star\rho=k\rho.\]
When $k>0$, the Newtonian force
$\pa_x^2 \mathcal{N}\star\rho$ is attractive, and when $k<0$, the Newtonian force
is repulsive.
We call the corresponding system Euler-Poisson-Alignment (EPA) system. It
has the form
\begin{align}
&\pa_t \rho + \pa_x (\rho u) = 0,\label{eq:EPArho}\\
&\pa_t u + u \pa_x u = -\pa_x \phi + \int_{\R} \psi(x-y)(u(y,t) - u(x,t))
\rho(y,t) dy, \label{eq:EPAu}
\end{align}
where the stream function $\phi=\mathcal{N}\star\rho$ satisfies the Poisson equation
\begin{equation}\label{eq:EPAphi}
\pa_x^2\phi =k\rho.
\end{equation}

When there is no alignment force $\psi\equiv0$, the system coincides
with the 1D pressureless Euler-Poisson equation, which has been
extensively studied in \cite{engelberg2001critical}. The result is as follows:
when $k>0$, the attraction force together with convection
drives the solution of Euler-Poisson equation to a finite time
blowup for all smooth initial data; when $k<0$, the repulsive force
competes with the convection, and there exists a critical
threshold on initial conditions which separates global regularity and
finite time blowup.

The EPA system \eqref{eq:EPArho}-\eqref{eq:EPAphi} is studied in
\cite{carrillo2016critical}, in the case when $\psi$ is Lipschitz .
When $k<0$, a larger subcritical region of initial
data is obtained that ensures global regularity. This implies that the
alignment force helps repulsive potential to compete with the
convection. However, it is also shown that when $k>0$, the alignment
force is too weak to compete with convection and attractive potential,
so all smooth initial data lead to finite time blow up.

Our first result concerns EPA system with singular alignment force,
where the influence function has the form \eqref{eq:singularpsi}. The
main goal is to understand whether the singular alignment can
still regularize the solution when the Newtonian force is present.

We shall study the system in the periodic setting.
The 1D periodic Newtonian potential reads
\begin{equation}\label{eq:Newtonian}
\mathcal{N}(x)=-\frac{k}{2}\left(\frac{1}{2}-|x|\right)^2,\quad\forall~x\in\T.
\end{equation}
It is the kernel of the Poisson equation with background, namely
\[\partial_x^2(\mathcal{N}\ast\rho)=k(\rho-\bar{\rho}),\]
where $\bar{\rho}$ is the average density
\begin{equation}\label{eq:rhobar}
\bar{\rho}=\frac{1}{|\T|}\int_\T\rho(x,t)dx=\int_\T\rho_0(x)dx.
\end{equation}
Note that $\bar{\rho}$ is conserved in time due to conservation of mass by evolution.
The stream function $\phi$ in \eqref{eq:EPAu} satisfies the Poisson
equation with constant background
\begin{equation}\label{eq:EPAphiP}
\pa_x^2\phi =k(\rho-\bar{\rho}).
\end{equation}

The presence of the background $\bar{\rho}$ could change the
behavior of the solution.
For Euler-Poisson equation in periodic domain, namely
\eqref{eq:EPArho}-\eqref{eq:EPAu},\eqref{eq:EPAphiP} with $\psi\equiv0$, it is pointed out in
\cite{engelberg2001critical} that the
background has the tendency to balance both the
convection and attractive forces. So for the attractive case $k>0$,
instead of finite time blowup for all initial data, a critical
threshold is obtained.

Though similar techniques in \cite{carrillo2016critical}, one can
derive critical thresholds for EPA system
\eqref{eq:EPArho}-\eqref{eq:EPAu},\eqref{eq:EPAphiP} with bounded Lipschitz
influence function $\psi$.

The EPA system with singular alignment force \eqref{eq:singularforce} and potential
\eqref{eq:Newtonian} reads
\begin{align}
&\pa_t \rho + \pa_x (\rho u) = 0,\label{eq:EPASrho}\\
&\pa_t u + u \pa_x u = -\pa_x \phi + \int_\T\psi_\alpha(y)(u(x+y,t)-u(x,t))\rho(x+y,t)dy,  \,\,\pa_x^2\phi =k(\rho-\bar{\rho}).\label{eq:EPASu}
\end{align}

The following theorem shows that the singular alignment force
dominates the Poisson force, and global regularity is obtained for all
initial data.
\begin{theorem}\label{thm:EAP}
For $\alpha\in(0,1)$, the fractional EPA system
\eqref{eq:EPASrho}-\eqref{eq:EPASu} with smooth periodic initial data
$(\rho_0, u_0)$ such that $\rho_0>0$ has a unique smooth
solution.
\end{theorem}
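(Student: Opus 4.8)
The plan is to adapt the strategy for the pure fractional Euler--Alignment system of \cite{do2017global}, isolating the good quantity that nearly transports and then closing a priori bounds on $\rho$ and $1/\rho$ in the presence of the Poisson source. I would first set up local well-posedness in $H^s(\T)$ for $s$ large by a standard energy method, with a continuation criterion: the smooth solution persists on $[0,T]$ provided $\|\rho(t)\|_{L^\infty}$, $\|1/\rho(t)\|_{L^\infty}$ and a uniform Hölder norm of $\rho$ stay finite on $[0,T]$. The whole task then reduces to a priori control of these quantities on an arbitrary compact interval. The central object is the modified quantity $G := \pa_x u - \Lambda^\alpha\rho$, where $\Lambda^\alpha\rho(x) = \int_\T\psi_\alpha(y)(\rho(x)-\rho(x+y))\,dy$ is the periodic fractional Laplacian generated by the alignment kernel. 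Writing $C(u,\rho)$ for the alignment integral in \eqref{eq:EPASu} and using the algebraic identity $\Lambda^\alpha(\rho u) = u\,\Lambda^\alpha\rho - C(u,\rho)$ together with the mass equation, differentiation of \eqref{eq:EPASu} in $x$ yields the balance law
\[ \pa_t G + \pa_x(uG) = -k(\rho - \bar\rho), \]
whose right-hand side is exactly $-\pa_x^2\phi$. Dividing by $\rho$ and differentiating along characteristics $\dot X = u(X,t)$ gives the clean evolution
\[ \frac{d}{dt}\Big(\frac{G}{\rho}\Big) = -k\Big(1 - \frac{\bar\rho}{\rho}\Big), \]
which is the key structural gain; in the force-free case the right side vanishes and $G/\rho$ is exactly transported, as in \cite{do2017global}.

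Next I would close the $L^\infty$ bounds. Setting $g = G/\rho$, the mass equation gives $\dot\rho = -\rho\,\pa_x u = -g\rho^2 - \rho\,\Lambda^\alpha\rho$ along characteristics. At a spatial minimum of $\rho$ one has $\Lambda^\alpha\rho\le 0$, so $\dot\rho \ge -\|g\|_{L^\infty}\rho^2$, while the $g$-equation gives $|\dot g|\le |k|(1+\bar\rho\,\|1/\rho\|_{L^\infty})$. With $A(t)=\|1/\rho(t)\|_{L^\infty}$ and $B(t)=\|g(t)\|_{L^\infty}$ these become the linear differential inequalities $A'\le B$ and $B'\le |k|+|k|\bar\rho\,A$, whose Gronwall solution grows at most exponentially. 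Hence on $[0,T]$ the density stays bounded below away from zero and $g$ (so $G$) stays bounded. For the upper bound on $\rho$, at a spatial maximum $\bar x$ I would invoke the nonlinear maximum principle $\Lambda^\alpha\rho(\bar x)\ge c\,\rho(\bar x)^{1+\alpha}/\|\rho\|_{L^1}^{\alpha} = c\,\rho(\bar x)^{1+\alpha}/\bar\rho^{\alpha}$, which combined with the lower bound $g\ge -B(T)$ gives $\frac{d}{dt}\max_x\rho \le B(T)(\max_x\rho)^2 - c\,\bar\rho^{-\alpha}(\max_x\rho)^{2+\alpha}$. Since $\alpha>0$, the dissipative term dominates once $\max_x\rho$ is large, so $\max_x\rho$ is bounded on $[0,T]$.

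With $\rho$ bounded above and below and $G$ bounded, it remains to propagate regularity. I would first note $u\in L^\infty$: since $|\pa_x\phi|\le \tfrac{|k|}{2}\bar\rho$ and $C(u,\rho)$ has a favorable sign at extrema of $u$, $\max_x u$ and $\min_x u$ move at most linearly in $t$. The density then solves the nonlocal drift--diffusion equation $\pa_t\rho + u\,\pa_x\rho = \rho\,\mathcal{L}\rho - \rho G$ with $\mathcal{L}=-\Lambda^\alpha$ a uniformly elliptic operator of order $\alpha$, its ellipticity being precisely the lower bound on $\rho$, with bounded drift and bounded source. Invoking the Hölder regularity theory for such fractional parabolic equations (as developed in \cite{do2017global}) upgrades the $L^\infty$ bounds to a uniform $C^\gamma$ bound on $\rho$, after which $\pa_x u = G+\Lambda^\alpha\rho$ gains regularity and a Schauder bootstrap propagates $C^\infty$ smoothness; with the continuation criterion this yields the global smooth solution, uniqueness following from the $H^s$ energy estimates.

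The main new difficulty relative to \cite{do2017global} is that the Poisson force breaks the exact conservation of $g = G/\rho$: the source $-k(1-\bar\rho/\rho)$ couples $g$ to $\rho$ and could a priori drive the density to vacuum, which in turn would make $g$ blow up. The crux is the observation that this coupling is only \emph{linear} in the pair $(\|1/\rho\|_{L^\infty},\|g\|_{L^\infty})$, so a Gronwall argument still delivers finite bounds on every compact time interval, while the order-$\alpha$ dissipation controls the upper bound on $\rho$ independently. I expect the verification of the precise constant in the nonlinear maximum principle on $\T$ and the fractional-parabolic regularity step to be the most technical parts of the argument.
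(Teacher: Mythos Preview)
Your strategy matches the paper's: introduce $G=\partial_xu-\Lambda^\alpha\rho$, derive $(\partial_t+u\partial_x)(G/\rho)=-k(1-\bar\rho/\rho)$, close $L^\infty$ bounds on $\rho$, $1/\rho$, $G$, and then upgrade regularity. Your lower-bound argument is actually cleaner than the paper's: setting $A=\|1/\rho\|_{L^\infty}$ and $B=\|g\|_{L^\infty}$ and reading off $A'\le B$, $B'\le|k|(1+\bar\rho A)$ gives exponential control by linear Gronwall, whereas the paper runs a proof by contradiction exploiting the extra positivity $-\Lambda^\alpha\rho(\underline x)\ge\psi_m(\bar\rho-\rho(\underline x))$ at the minimum. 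The upper bound via the nonlinear maximum principle is the same.

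The genuine gap is the regularity step. You cast the density equation as fractional drift--diffusion with bounded drift $u$ and invoke ``H\"older regularity theory for such fractional parabolic equations (as developed in \cite{do2017global}).'' But for $\alpha<1$ a merely bounded drift is \emph{supercritical} relative to $\Lambda^\alpha$, and no black-box H\"older estimate exists---fractal Burgers itself blows up in this range. What \cite{do2017global} actually does is a modulus-of-continuity argument tailored to the specific decomposition $u=\Lambda^{\alpha}\partial_x^{-1}(\rho-\bar\rho)+\partial_x^{-1}G+I_0$: the first piece is handled \`a la critical SQG, and the second is Lipschitz because $G\in L^\infty$. The paper's main technical content is showing this argument survives the Poisson forcing. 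Since $\rho_m(T)$ now decays and $\rho_M$, $\|F\|_{L^\infty}$ grow, the modulus parameters $\delta,\gamma,B$ must be chosen $T$-dependently (a new feature absent in both \cite{kiselev2007global} and \cite{do2017global}), and an additional ingredient is needed: a Lipschitz bound on $F=G/\rho$ (the paper's Lemma~\ref{lem:FxFy}), obtained by tracking $q=\partial_xF/\rho$ along characteristics, which is what controls the term $\rho(y_1)^2\big(F(x_1)-F(y_1)\big)$ at a breakthrough point. Your proposal does not anticipate this structure; labeling the step ``technical'' understates that it is where the new work of the theorem lies.
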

\begin{remark} The proof can be easily extended to the range $\alpha \geq 1$ with more straightforward arguments
for $\alpha >1;$ see also \cite{shvydkoy2017eulerian} for a different approach.
We focus on the $0 < \alpha <1$ case in the rest of the paper.
\end{remark}

We note that the proof of global regularity in \cite{do2017global} is based, in particular, on rather precise algebraic
structures that we will discuss below. Even though the interaction force we are adding is formally sub-critical,
it is far from obvious that the fairly intricate arguments of \cite{do2017global} survive such perturbation.

\subsection{Euler dynamics with general three-zone interactions}
The results on EPA system can be extended to systems with more
general interaction forces.

In \cite{carrillo2016critical}, critical thresholds are obtained for
the system \eqref{eq:mainrho}-\eqref{eq:mainu}, with
Lipschitz influence function $\psi$, and regular potential $K\in W^{2,\infty}$.

In this paper, we will also consider the case of more general singular influence function $\psi$.
More precisely, we assume that $\psi\geq\psi_m>0$, and  can
be decomposed into
two parts
\begin{equation}\label{eq:psidecomp}
\psi=c\psi_\alpha+\psi_L,
\end{equation}
where $c>0$, $\psi_\alpha$ is defined in \eqref{eq:psip}, and $\psi_L$ is a bounded
Lipschitz function.

\begin{theorem}\label{thm:3zone}
Consider system
\eqref{eq:mainrho}-\eqref{eq:mainu} in the periodic setup
\begin{align}
&\pa_t \rho + \pa_x (\rho u) = 0,\quad x \in \T, \quad t > 0, \label{eq:mainrhop} \\
&\,\,\,\, \pa_t u + u\pa_x u =  \int_\T\psi(y)(u(x+y,t)-u(x,t))\rho(x+y,t)dy
- \pa_x K \star \rho,\label{eq:mainup}
\end{align}
with smooth initial data $(\rho_0, u_0)$ such that $\rho_0>0$. Assume
$\psi$ is singular in the sense of \eqref{eq:psidecomp}, and $K$ is
a linear combination of Newtonian potential \eqref{eq:Newtonian}
and regular $W^{2,\infty}(\T)$ potential.

Then, the system has a unique global smooth solution.
\end{theorem}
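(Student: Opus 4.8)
The plan is to reduce the theorem to a priori bounds, run the scheme behind Theorem~\ref{thm:EAP}, and verify that the two genuinely new ingredients in the decomposition \eqref{eq:psidecomp} — the bounded Lipschitz part $\psi_L$ of the kernel and the $W^{2,\infty}$ part of the potential $K$ — enter only as controllable perturbations. First I would record local well-posedness for smooth data with $\rho_0>0$, together with the continuation criterion: the solution extends as long as, on each finite interval, one controls $\|\rho\|_{L^\infty}$, $\inf\rho>0$, and $\|\partial_x u\|_{L^\infty}$, with higher Sobolev norms propagating by standard energy estimates (the singular operator contributing only favorable dissipation). Neither $\psi_L$ nor the regular potential affects this step, so global regularity reduces to finite-on-finite-interval bounds for $\rho$ (above and below) and for $\partial_x u$.

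Second, I would introduce the good variable. With $[f](x):=\int_\T\psi(y)\bigl(f(x+y)-f(x)\bigr)\,dy$ for the \emph{full} kernel $\psi=c\psi_\alpha+\psi_L$, set $G:=\partial_x u+[\rho]$. A direct computation, using only that the same $\psi$ appears in the alignment force and in $[\,\cdot\,]$, yields the exact conservation identity
\[
\partial_t G+\partial_x(uG)=-\partial_x^2\phi=-k(\rho-\bar\rho)-(\partial_x^2 K_{\mathrm{reg}})\star\rho,
\]
where $K_{\mathrm{reg}}\in W^{2,\infty}$ is the regular part of $K$; crucially, $\psi_L$ produces no separate term in this identity. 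Writing $[\rho]=-c\Lambda^\alpha\rho+[\rho]_L$, with $\Lambda^\alpha$ the fractional Laplacian associated with $\psi_\alpha$ and $[\rho]_L$ the contribution of $\psi_L$, the density equation becomes
\[
(\partial_t+u\partial_x)\rho=-\rho\,\partial_x u=-\rho G-c\,\rho\,\Lambda^\alpha\rho+\rho\,[\rho]_L .
\]
Thus the singular dissipation $-c\rho\Lambda^\alpha\rho$ and the Newtonian forcing $-k(\rho-\bar\rho)$ are exactly those of Theorem~\ref{thm:EAP}; the genuinely new terms are the uniformly bounded source $-(\partial_x^2 K_{\mathrm{reg}})\star\rho$, with $\|(\partial_x^2 K_{\mathrm{reg}})\star\rho\|_{L^\infty}\le\|\partial_x^2 K_{\mathrm{reg}}\|_{L^\infty}\|\rho\|_{L^1}$, and the term $\rho\,[\rho]_L$ (through which $\psi_L$ re-enters).

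Third, I would establish the bounds in order: (a) the lower bound $\rho\ge\rho_->0$, preserved along characteristics and made quantitative using $\psi\ge\psi_m$ and the sign $-c\rho\Lambda^\alpha\rho\ge0$ at a spatial minimum; (b) the upper bound on $\rho$, obtained by evaluating the density equation at a maximum point $x_0$, where $\rho_{\max}=\rho(x_0)$, and invoking a nonlinear maximum principle of the form $\Lambda^\alpha\rho(x_0)\ge c\,\rho_{\max}^{1+\alpha}/\|\rho\|_{L^1}^{\alpha}$: the dissipation then contributes $-c\rho\Lambda^\alpha\rho(x_0)\le-c'\rho_{\max}^{2+\alpha}/\|\rho\|_{L^1}^{\alpha}$, of super-quadratic order, whereas $|\rho\,[\rho]_L(x_0)|\le\|\psi_L\|_{L^\infty}\rho_{\max}\bigl(\|\rho\|_{L^1}+|\T|\rho_{\max}\bigr)=O(\rho_{\max}^2)$ is absorbed once $\rho_{\max}$ is large; (c) the bound on $G$, hence on $\partial_x u=G+c\Lambda^\alpha\rho-[\rho]_L$, from the Riccati-type structure of its equation, where the new contributions are the uniformly bounded $K_{\mathrm{reg}}$ source and, through $u_x$, the term $[\rho]_L=O(\rho_{\max})$; and (d) propagation of higher regularity and uniqueness by energy estimates.

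The main obstacle is the coupling of (b) and (c): controlling $\rho_{\max}$ presumes a bound on $G$, while bounding $G$ feeds back through $\partial_x u$ and the forcing $-k(\rho-\bar\rho)$, and it is precisely this matching of the singular dissipation against the critical-order terms that makes the arguments of \cite{do2017global} and of Theorem~\ref{thm:EAP} delicate. As flagged after Theorem~\ref{thm:EAP}, one cannot merely invoke that the perturbations are ``small''; instead I would verify, within the same simultaneous (continuity-in-time / bootstrap) argument, that $\rho\,[\rho]_L$ is dominated by the super-quadratic dissipation and that the $W^{2,\infty}$ forcing is dominated by the mechanism already controlling the Newtonian force. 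Since, by linearity, the singular-plus-Newtonian core is inherited verbatim from Theorem~\ref{thm:EAP}, establishing these two domination statements completes the a priori bounds, and the continuation criterion then yields the unique global smooth solution.
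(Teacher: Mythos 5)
Your overall scheme is the same as the paper's: pass to a good variable $G$ satisfying an exact conservation law, prove pointwise lower and upper bounds on $\rho$ at extremal points, and then invoke the machinery of Theorem \ref{thm:EAP}. Your choice $G=\partial_xu+[\rho]$ with the full kernel is legitimate and in fact slightly cleaner than the paper's $G=\partial_xu-c\Lambda^\alpha\rho+\psi_L\star\rho$ from \eqref{eq:gg}: the two differ by $\bigl(\int_\T\psi_L\bigr)\rho$, and since $\rho$ itself solves the continuity equation, both choices satisfy $\partial_tG+\partial_x(uG)=-\partial_{xx}^2K\star\rho$. Your identification of the two new terms ($\rho\,[\rho]_L$, quadratic in $\rho_{\max}$ and absorbed by the super-quadratic dissipation at a maximum; the uniformly bounded $\partial_{xx}^2K_{\mathrm{reg}}\star\rho$ forcing in the $G$-equation) matches what the paper does in Steps 1--2 of Section 3.

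The gap is in your step (c) and in the continuation criterion you posit. You claim that a bound on $G$ (via the ``Riccati-type structure'') yields a bound on $\partial_xu=G+c\Lambda^\alpha\rho-[\rho]_L$, and that controlling $\|\rho\|_{L^\infty}$, $\inf\rho$ and $\|\partial_xu\|_{L^\infty}$ closes the argument. This does not work for $0<\alpha<1$: the $L^\infty$ bound on $G$ is indeed easy (it follows from transport of $F=G/\rho$, not from any Riccati mechanism), but $\Lambda^\alpha\rho$ is \emph{not} controlled by $\|\rho\|_{L^\infty}$ and $\|G\|_{L^\infty}$ --- it requires H\"older regularity of $\rho$ beyond bounded. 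This is exactly why the paper's continuation criterion is $\int_0^T\|\partial_x\rho\|_{L^\infty}^2\,dt<\infty$ (condition \eqref{eq:BKM}), and why the core of the proof is the time-dependent modulus-of-continuity argument of Section \ref{sec:global}, not the pointwise density bounds. Your proposal defers this entirely to ``the scheme behind Theorem \ref{thm:EAP},'' but that scheme is not inherited verbatim: one must check that the extra drift $-\partial_x^{-1}(\psi_L\star\rho)$ appearing in $u_2$ still has bounded derivative (the only property of $u_2$ the breakthrough analysis uses), and one must re-prove the Lipschitz bound on $F$ (Lemma \ref{lem:FxFy}) with the forcing $-(\partial_{xx}^2K\star\rho)/\rho$ replacing $-k(1-\bar\rho/\rho)$, since that bound enters the estimate of the term $V$ at the breakthrough points. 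These verifications are short but they are precisely the content of Step 4 of the paper's proof, and without them --- or with your incorrect continuation criterion in their place --- the argument does not close.
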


We summarize the global behaviors of Euler equations with nonlocal
interaction \eqref{eq:mainrho}-\eqref{eq:mainu} under different
choices of interaction forces.
\begin{table}[h]
{\def\arraystretch{1.5}
\begin{tabular}{|c|c|c|c|l|}
\hline
Potential&Alignment
&Name&Domain&Behaviors\\ \hline
No&No&Euler& $\R$ or $\T$&Finite time blow up \\ \cline{2-5}
&Lipshitz&Euler-Alignment&$\R$ or $\T$& Critical threshold \cite{carrillo2016critical,tadmor2014critical}\\ \cline{2-5}
&Singular&Fractional EA&$\T$& Global regularity \cite{do2017global}\\ \hline
Newtonian&No&Euler-Poisson&$\R$& Finite time blow up \cite{engelberg2001critical}
\\ \cline{4-5}
&&&$\T$& Critical threshold \cite{engelberg2001critical}
\\ \cline{2-5}
&Lipschitz&EPA&$\R$& Finite time blow up (attractive)\\
&&&& Critical threshold (repulsive) \cite{carrillo2016critical}\\ \cline{4-5}
&&&$\T$& Critical threshold\\ \cline{2-5}
&Singular&Fractional EPA&$\T$& Global regularity (Theorem \ref{thm:EAP})\\ \hline
General&Lipshitz&Euler-3Zone& $\R$ or $\T$& Critical thresholds \cite{carrillo2016critical}\\ \cline{2-5}
&Singular&Singular 3Zone& $\T$&  Global regularity (Theorem \ref{thm:3zone})\\ \hline
\end{tabular}
}
\end{table}

\section{Euler-Poisson-Alignment system}\label{sec:EAP}
In this section, we consider Euler-Poisson-Alignment system
\eqref{eq:EPASrho}-\eqref{eq:EPASu}
with singular alignment force \eqref{eq:singularforce}.

Following the idea in \cite{do2017global}, we let
\begin{equation}\label{eq:G}
G=\pa_xu-\Lambda^\alpha\rho,
\end{equation}
and calculate the dynamics of $G$ using \eqref{eq:EPASrho} and
\eqref{eq:EPASu}:
\begin{align*}
\pa_tG=&~\pa_t\pa_xu-\pa_t\Lambda^\alpha\rho
=-\pa_x(u\pa_xu)-k(\rho-\bar{\rho})+\pa_x\left(-\Lambda^\alpha(\rho
         u)+u\Lambda^\alpha\rho\right)+\Lambda^\alpha\pa_x(\rho u)\\
=&-u\pa_x(\pa_xu-\Lambda^\alpha\rho)-\pa_xu(\pa_xu-\Lambda^\alpha\rho)-k(\rho-\bar{\rho})
=-\pa_x(Gu)-k(\rho-\bar{\rho}).
\end{align*}
So, we can rewrite the dynamics in terms of $(\rho,G)$ as
\begin{align}
&\pa_t \rho + \pa_x (\rho u) = 0,\label{eq:EPASrho2}\\
&\pa_t G + \pa_x (Gu) = -k(\rho-\bar{\rho}),\label{eq:EPASG}\\
&\pa_xu=\Lambda^\alpha\rho+G.\label{eq:EPASux}
\end{align}

The velocity $u$ can be recovered as
\begin{equation}\label{eq:urec}
u(x,t)=\Lambda^\alpha\partial_x^{-1}(\rho(x,t)-\bar{\rho})+\partial_x^{-1}G(x,t)+I_0(t),
\end{equation}
where $I_0$ can be determined by conservation of momentum.
\begin{equation}\label{eq:mom}
\int_\T\rho(x,t) u(x,t)dx=\int_\T\rho_0(x) u_0(x)dx.
\end{equation}
See \cite{do2017global} for detailed discussion.

\subsection{A priori bounds}
We first show an upper and lower bounds on density $\rho$ for all
finite times. For $k=0$, a uniform in time
bound is obtained in \cite{do2017global}.
With the Newtonian potential, especially when $k>0$, the
attractive force definitely helps density concentration. Hence, the upper bound
on $\rho$ can be expected to grow in time. However, the bound we obtain in this section indicates
that there is no finite time singular concentration on density, thanks to the
singular alignment force.

Let $F=G/\rho$. We can rewrite \eqref{eq:EPASrho2} as
\begin{equation}\label{eq:rhody}
(\pa_t+u\pa_x)\rho=-\rho\Lambda^\alpha\rho-\rho^2F.
\end{equation}

The first step is to obtain a bound on $F$. We calculate
\[\pa_tF=\frac{\rho\pa_tG-G\pa_t\rho}{\rho^2}
=\frac{\rho(-\pa_x(Gu)-k(\rho-\bar{\rho}))-G(-\pa_x(\rho u))}{\rho^2}
=-u\pa_xF-\frac{k(\rho-\bar{\rho})}{\rho}.
\]
This implies that
\begin{equation}\label{eq:Fcha}
(\pa_t+u\pa_x) F=-k\left(1-\frac{\bar{\rho}}{\rho}\right).
\end{equation}
Denote $X(x,t)$ the trajectory of the characterstic path starting
at $x$, namely
\begin{equation}\label{eq:path}
\frac{d}{dt}X(x,t)=u(X(x,t),t),\quad X(x,0)=x.
\end{equation}
Then, we can solve for $F$ along the characteristic path
\begin{equation}\label{eq:F}
F(X(x,t),t)=F_0(x)-kt+\int_0^t\frac{k\bar{\rho}}{\rho(X(x,s),s)}ds.
\end{equation}
Define $\rho_m(t)$ as the lower bound of $\rho$ on time interval $[0,t]$
\begin{equation}\label{eq:rhom}
\rho_m(t)=\min_{s\in[0,T]}\min_{x\in\T}\rho(x,s).
\end{equation}
Then, we get a bound on $F$ from \eqref{eq:F}:
\begin{equation}\label{eq:Fbound}
\|F(\cdot,t)\|_{L^\infty}\leq\|F_0\|_{L^\infty}+|k|t+|k|\bar{\rho}\int_0^t\frac{1}{\rho_m(s)}ds.
\end{equation}
Therefore, in order to control $F$ in $L^\infty$,  we need a lower bound estimate on
the density.

\begin{theorem}[Lower bound on density]\label{thm:lower}
Let $(\rho, u)$ be a strong solution to EPA system
\eqref{eq:EPASrho}\eqref{eq:EPASu} with smooth periodic initial
conditions $(\rho_0, u_0)$ such that $\rho_m(0)>0$.
Then, there exist two positive constants $A_m$ and $C_m$,
depending only on the initial conditions, such that
for any $t\geq 0$,
\begin{equation}\label{eq:lowerbound}
\rho_m(t)\geq C_me^{-A_mt}.
\end{equation}
\end{theorem}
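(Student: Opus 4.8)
The plan is to track the pointwise minimum $m(t):=\min_{x\in\T}\rho(x,t)$ and show it cannot decay faster than exponentially. Since both $t\mapsto e^{-A_m t}$ and $t\mapsto\rho_m(t)$ are nonincreasing, a pointwise bound $m(t)\ge C_m e^{-A_m t}$ upgrades automatically to $\rho_m(t)=\min_{s\le t}m(s)\ge C_m e^{-A_m t}$, which is \eqref{eq:lowerbound}. For a smooth solution the minimum is attained at some $x_*(t)\in\T$, the map $t\mapsto m(t)$ is locally Lipschitz, and by the standard envelope argument (Rademacher's theorem applied to $m$) one has, for a.e. $t$, that $m'(t)=(\pa_t+u\pa_x)\rho$ evaluated at $x_*(t)$; the transport term drops because $\pa_x\rho(x_*)=0$. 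Feeding this into \eqref{eq:rhody} gives $m'(t)=-\rho\,\Lambda^\alpha\rho|_{x_*}-\rho^2F|_{x_*}$.

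The fractional term is the favorable one. Writing $\Lambda^\alpha\rho(x)=\int_\T\psi_\alpha(y)\big(\rho(x)-\rho(x+y)\big)\,dy$, at a minimum the integrand is nonpositive, and using only the uniform floor $\psi_\alpha\ge\psi_m>0$ from \eqref{eq:psim} together with mass conservation $\int_\T(\rho(x_*+y)-m)\,dy=\bar\rho-m\ge0$ yields
\[
-\Lambda^\alpha\rho(x_*)\ \ge\ \psi_m(\bar\rho-m)\ \ge\ 0 .
\]
Note the precise singularity of $\psi_\alpha$ is irrelevant here: only positivity and the floor $\psi_m$ enter, reflecting that the lower bound on density is controlled by the mild part of the kernel rather than its singularity. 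Bounding the remaining term by $-\rho^2F|_{x_*}\ge-m^2\|F(\cdot,t)\|_{L^\infty}$ produces the differential inequality
\[
m'(t)\ \ge\ \psi_m\,m(\bar\rho-m)-m^2\|F(\cdot,t)\|_{L^\infty}.
\]
When $k=0$ we have $\|F\|_{L^\infty}\equiv\|F_0\|_{L^\infty}$ by \eqref{eq:Fcha}, and the right-hand side is positive whenever $m<\psi_m\bar\rho/(\psi_m+\|F_0\|_{L^\infty})$, recovering the uniform-in-time lower bound of \cite{do2017global} immediately.

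For $k\neq0$ the density bound and the $F$ bound are coupled, since \eqref{eq:Fbound} controls $\|F(\cdot,t)\|_{L^\infty}$ through $\int_0^t\rho_m(s)^{-1}\,ds$, which in turn depends on the quantity we are estimating. I would pass to $w(t)=1/m(t)$, turning the degenerate Riccati inequality above into the linear inequality $w'\le-\psi_m\bar\rho\,w+\psi_m+\|F(\cdot,t)\|_{L^\infty}$, and run a continuity/bootstrap argument on the ansatz $\rho_m(t)\ge C_m e^{-A_m t}$, i.e.\ $w(t)\le C_m^{-1}e^{A_m t}$. Substituting the ansatz into \eqref{eq:Fbound} gives $\|F(\cdot,t)\|_{L^\infty}\lesssim 1+t+C_m^{-1}A_m^{-1}e^{A_m t}$; inserting this forcing and solving the linear inequality with integrating factor $e^{\psi_m\bar\rho t}$ reproduces $w(t)\le C_m^{-1}e^{A_m t}$, the exponential contribution carrying a prefactor $\sim|k|\bar\rho\,[A_m(A_m+\psi_m\bar\rho)]^{-1}$ that is made $<\tfrac12$ by taking $A_m$ large, while the linear-in-$t$ and initial-data contributions are absorbed into the other half by taking $C_m$ small. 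This closes the bootstrap.

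The main obstacle is precisely this self-consistent coupling: one must show that the alignment gain $\psi_m\,m(\bar\rho-m)$ dominates the loss $m^2\|F\|_{L^\infty}$ all the way down to small $m$, despite $\|F\|_{L^\infty}$ itself growing with $\int\rho_m^{-1}$. The reciprocal substitution is what renders this tractable, converting the degenerate Riccati structure into a linear damped equation whose damping rate $\psi_m\bar\rho$ is exactly what defeats the exponentially growing forcing; the secondary point to record carefully is the a.e.\ differentiability of $m(t)$ and the identification of $m'(t)$ with the material derivative at the minimizer.
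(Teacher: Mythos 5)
Your proposal is correct and follows essentially the same route as the paper: evaluate \eqref{eq:rhody} at a minimizer of $\rho$, bound $-\Lambda^\alpha\rho$ from below by $\psi_m(\bar\rho-\rho)$ using only the floor \eqref{eq:psim}, couple with the $F$-bound \eqref{eq:Fbound}, and close the self-consistent exponential ansatz by taking $A_m$ large and $C_m$ small. The only difference is cosmetic: the paper argues by contradiction at the first time the bound $C_me^{-A_mt}$ is violated (which sidesteps the a.e.-differentiability discussion of $t\mapsto\min_x\rho(x,t)$), whereas you linearize via $w=1/m$ and run a Gronwall-type bootstrap; both close for the same reason.
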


\begin{proof}
We depart from \eqref{eq:rhody} and estimate $\Lambda^\alpha\rho$ and $F$.
For a fixed time $t$, denote $\underline{x}$ be a point where $\rho$
attains its minimum. Note that $\underline{x}$ depends on $t$ and it
is not necessarily unique. The estimates below apply at any such point. We have
\begin{equation}\label{eq:lb1}
\begin{aligned}
-\Lambda^\alpha\rho(\underline{x},t)=&~c_\alpha\int_{-\infty}^\infty
\frac{\rho(\underline{x}+y,t)-\rho(\underline{x},t)}{|y|^{1+\alpha}}dy
=\int_\T\psi_\alpha(y)\big(\rho(\underline{x}+y,t)-\rho(\underline{x},t)\big)dy
\\
\geq&~
\psi_m\int_\T\big(\rho(\underline{x}+y,t)-\rho(\underline{x},t)\big)dy
=\psi_m\big(\bar{\rho}-\rho(\underline{x},t)\big).
\end{aligned}
\end{equation}
Here, we recall that $\psi_m$ is the positive lower bound of
$\psi_\alpha$ defined in \eqref{eq:psim}.

Combining \eqref{eq:Fbound} and \eqref{eq:lb1}, we obtain
\begin{equation}\label{eq:rhombound1}
\pa_t\rho(\underline{x},t)\geq~
\big(\psi_m\bar{\rho}\big)\rho(\underline{x},t)-\left[\psi_m+
\|F_0\|_{L^\infty}+|k|t+|k|\bar{\rho}\int_0^t\frac{1}{\rho_m(s)}ds\right]\rho(\underline{x},t)^2.
\end{equation}

We prove \eqref{eq:lowerbound} by contradiction.
For $t=0$, the bound \eqref{eq:lowerbound} holds if we let
$C_m\leq\rho_m(0)$. Suppose \eqref{eq:lowerbound} does not hold for
all $t\geq0$. Then, there exists a finite time $t_0>0$ so that the
inequality is violated for the first time at $t=t_0+$.
Pick any $\underline{x}=\underline{x}(t_0)$. Due to continuity of
$\rho$, we know
\begin{equation}\label{eq:rhomb}
\rho_m(t_0)=\rho(\underline{x},t_0)=C_me^{-A_mt_0}.
\end{equation}
Plug in \eqref{eq:rhomb} to \eqref{eq:rhombound1} and use the fact
that \eqref{eq:lowerbound} holds for all $t\in[0,t_0]$. We get
\begin{align*}
\pa_t\rho(\underline{x},t_0)\geq&~\rho_m(t_0)\left[
\big(\psi_m\bar{\rho}\big)-\left(\psi_m+
\|F_0\|_{L^\infty}+|k|t+|k|\bar{\rho}\int_0^{t_0}\frac{1}{\rho_m(s)}ds\right)\rho_m(t_0)
\right]\\
\geq&~\rho_m(t_0)\left[
\big(\psi_m\bar{\rho}\big)-\left(\psi_m+
\|F_0\|_{L^\infty}+|k|t_0+\frac{|k|\bar{\rho}}{A_mC_m}(e^{A_mt_0}-1)\right)C_me^{-A_mt_0}
\right]\\
\geq&~\rho_m(t_0)\left[
\left(\psi_m\bar{\rho}-\frac{|k|\bar{\rho}}{A_m}\right)-\left(\psi_m+
\|F_0\|_{L^\infty}+|k|t_0-\frac{|k|\bar{\rho}}{A_mC_m}\right)C_me^{-A_mt_0}
\right]\\
\geq&~\rho_m(t_0)\left[
\left(\psi_m\bar{\rho}-\frac{|k|\bar{\rho}}{A_m}-\frac{|k|C_m}{eA_m}\right)
+\left(\frac{|k|\bar{\rho}}{A_m}-C_m(\psi_m+\|F_0\|_{L^\infty})\right)e^{-A_mt_0}
\right].
\end{align*}
The right hand side is positive if we pick $A_m$ large enough and
$C_m$ small enough. For instance, we can pick
\begin{equation}\label{eq:AC}
A_m=\frac{|k|}{\psi_m}(1+\epsilon),\quad
C_m=\min\{\rho_m(0), \epsilon e\bar{\rho}\},
\end{equation}
for any $\epsilon\in(0,\epsilon_*)$, where $\epsilon_*=\frac{1}{2}\left(\sqrt{1+\frac{4\psi_m}{e(\psi_m+\|F_0\|_{L^\infty})}}-1\right)$.
With this choice of $A_m$ and $C_m$, we get
$\pa_t\rho(\underline{x},t_0)> 0$.

Now we obtain that $\rho(\underline{x}) < C_m e^{-A_m t_0} < C_m e^{-A_m t}$ for some $t < t_0.$
This contradicts our choice of $t_0.$

\end{proof}

\begin{remark}
The bound \eqref{eq:lowerbound} with decay rate \eqref{eq:AC}
is not necessarily sharp, but is enough for our purpose, as it
eliminates the possibility of finite time creation of vacuum.
One important observation is that for $k=0$, we get $A_m=0$.
In this case, the lower bound is uniform in time.
\end{remark}

Applying the lower bound \eqref{eq:lowerbound} to \eqref{eq:Fbound}, we
immediately derive a bound on $F$
\begin{equation}\label{eq:Fbound2}
\|F(\cdot,t)\|_{L^\infty}\leq \|F_0\|_{L^\infty} +
|k|t+\frac{|k|\bar{\rho}}{A_mC_m}e^{A_mt}
=:F_M(t).
\end{equation}

Now, we are ready to obtain an upper bound on density $\rho$.

\begin{theorem}[Upper bound on density]\label{thm:upper}
Let $(\rho, u)$ be a strong solution to EPA system
\eqref{eq:EPASrho}\eqref{eq:EPASu} with smooth periodic initial
conditions $(\rho_0, u_0)$ such that $\rho_m(0)>0$.
Then, there exist two positive constants $A_M$ and $C_M$,
depending only on the initial conditions, such that
for any $t\geq 0$ and $x\in\T$,
\begin{equation}\label{eq:upperbound}
\rho(x,t)\leq \rho_M(t):=C_Me^{A_Mt}.
\end{equation}
\end{theorem}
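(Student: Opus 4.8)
The plan is to mirror the proof of Theorem \ref{thm:lower}, but now working at a point of \emph{maximal} density and extracting a genuinely superlinear dissipation from the fractional term. Fix $t$, let $M(t):=\max_{x\in\T}\rho(x,t)$ and let $\bar x=\bar x(t)$ be a point where this maximum is attained. Since $\pa_x\rho(\bar x,t)=0$, the transport term drops out and \eqref{eq:rhody} evaluated at $\bar x$ becomes $\pa_t\rho(\bar x,t)=-\rho(\bar x,t)\Lambda^\alpha\rho(\bar x,t)-\rho(\bar x,t)^2F(\bar x,t)$; using the bound \eqref{eq:Fbound2} this gives
\[
\pa_t\rho(\bar x,t)\leq -M(t)\,\Lambda^\alpha\rho(\bar x,t)+F_M(t)\,M(t)^2 .
\]
Everything hinges on a good \emph{lower} bound for $\Lambda^\alpha\rho(\bar x,t)$.

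The crude estimate used in \eqref{eq:lb1}, replacing $\psi_\alpha$ by its floor $\psi_m$, is \emph{not} enough here: at the maximum it only yields $\Lambda^\alpha\rho(\bar x)\geq\psi_m\bigl(M-\bar\rho\bigr)$, a linear bound, which against the quadratic production $F_M M^2$ fails once $F_M(t)$ grows past $\psi_m$. Instead I would exploit the full singularity of the kernel together with the mass constraint $\int_\T\rho=\bar\rho$. Writing $\Lambda^\alpha\rho(\bar x)=\int_\T\psi_\alpha(y)\bigl(M-\rho(\bar x+y)\bigr)dy$ and using $\psi_\alpha(y)\geq c_\alpha|y|^{-1-\alpha}$, I restrict the integral to the low-density set $D=\{y\in\T:\rho(\bar x+y)\leq M/2\}$. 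Chebyshev and $\int_\T\rho=\bar\rho$ give $|\T\setminus D|\leq 2\bar\rho/M$, so $D$ fills $\T$ except a set of measure $\leq 2\bar\rho/M$ which, in the worst case, sits at the origin. Minimizing $\int_D|y|^{-1-\alpha}dy$ over such $D$ and using $M-\rho(\bar x+y)\geq M/2$ on $D$ yields, for $M\geq 2\bar\rho$,
\[
\Lambda^\alpha\rho(\bar x,t)\geq \frac{c_\alpha}{\alpha\,\bar\rho^{\,\alpha}}\,M(t)^{1+\alpha}-C_0\,M(t),\qquad C_0=\frac{c_\alpha 2^\alpha}{\alpha}.
\]
This nonlinear maximum-principle estimate is the crux and the step I expect to be the main obstacle: the point is that the exponent $1+\alpha$ (rather than $1$) appears, which is exactly what is needed to beat the quadratic production.

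Substituting this into the bound for $\pa_t\rho(\bar x,t)$ produces the differential inequality
\[
\tfrac{d^+}{dt}M(t)\leq -\frac{c_\alpha}{\alpha\,\bar\rho^{\,\alpha}}\,M(t)^{2+\alpha}+\bigl(C_0+F_M(t)\bigr)\,M(t)^2 ,
\]
valid whenever $M(t)\geq 2\bar\rho$. The superlinear dissipation $M^{2+\alpha}$ now dominates the quadratic production for large $M$, so $M$ cannot blow up in finite time and only its growth rate is in question. I would close the argument by the same barrier/contradiction scheme as in Theorem \ref{thm:lower}: assume that $M(t)\leq C_Me^{A_Mt}=:\rho_M(t)$ first fails at some $t_0$, so that $M(t_0)=C_Me^{A_Mt_0}$ and $\tfrac{d^+}{dt}M(t_0)\geq A_M M(t_0)$, and derive a contradiction from the displayed inequality.

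The constants are chosen so that the exponential rates match. Since $F_M(t)\sim \tfrac{|k|\bar\rho}{A_mC_m}e^{A_mt}$, taking $A_M=A_m/\alpha$ makes the leading dissipation $M^{1+\alpha}\sim C_M^{1+\alpha}e^{(A_m+A_m/\alpha)t}$ and the leading production $F_MM\sim C_M e^{(A_m+A_m/\alpha)t}$ carry the \emph{same} exponential, so it suffices to take $C_M$ large enough (and $C_M\geq\max\{\rho_M(0),2\bar\rho\}$, with $\rho_M(0)=\max_x\rho_0$) that the dissipation coefficient beats the production coefficient, i.e.\ $\tfrac{c_\alpha}{\alpha\bar\rho^\alpha}C_M^{\alpha}>\tfrac{|k|\bar\rho}{A_mC_m}$, while the genuinely lower-order contributions ($C_0M$, $\|F_0\|_{L^\infty}M$, $|k|t\,M$) are absorbed by enlarging $A_M$ slightly. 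This forces $\tfrac{d^+}{dt}M(t_0)<A_M M(t_0)$, contradicting the choice of $t_0$, and establishes \eqref{eq:upperbound} with $A_M$ of order $A_m/\alpha$.
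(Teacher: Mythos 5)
Your proposal is correct and follows essentially the same route as the paper: evaluate \eqref{eq:rhody} at a point of maximal density, obtain a lower bound $\Lambda^\alpha\rho(\bar x,t)\gtrsim \rho(\bar x,t)^{1+\alpha}$ for $\rho(\bar x,t)$ large, and conclude via an ODE/barrier comparison with $A_M=A_m/\alpha$. The only difference is that what you identify as the crux and derive by hand (the Chebyshev argument on the low-density set) is precisely the Constantin--Vicol nonlinear maximum principle \eqref{eq:CVbound}, which the paper simply cites from \cite{constantin2012nonlinear,do2017global}.
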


\begin{proof}
We again depart from \eqref{eq:rhody} and start with a lower bound
estimate on $\Lambda^\alpha\rho$.
For a fixed time $t$, denote $\bar{x}$ be a point where $\rho$
attains its maximum. Applying
nonlinear maximum principle by Constantin and Vicol
\cite{constantin2012nonlinear}, one can estimate
\begin{equation}\label{eq:CVbound}
\Lambda^\alpha\rho(\bar{x},t)\geq C_1\rho(\bar{x},t)^{1+\alpha},
\end{equation}
if $\rho(\bar{x},t)\geq 3\bar{\rho}$. The constant $C_1$ only depends
on initial conditions. One can consult \cite{do2017global} for more
details of the estimate.

Plugging the estimates \eqref{eq:Fbound2} and \eqref{eq:CVbound} into
\eqref{eq:rhody}, we obtain
\begin{equation}\label{eq:upperbound1}
\pa_t\rho(\bar{x},t)\leq -C_1\rho(\bar{x},t)^{2+\alpha}+
F_M(t)\rho(\bar{x},t)^2.
\end{equation}
It follows that $\pa_t\rho(\bar{x},t)<0$ if
$\rho(\bar{x},t)>(F_M/C_1)^{1/\alpha}$. Therefore,
\begin{equation}\label{eq:rhomaxb}
\rho(x,t)\leq\rho(\bar{x},t)\leq\max\left\{\|\rho_0\|_{L^\infty},
  3\bar{\rho}, \left(\frac{F_M(t)}{C_1}\right)^{1/\alpha}\right\},
\end{equation}
and \eqref{eq:upperbound} holds with
\[A_M=\frac{A_m}{\alpha},\quad
C_M=\max\left\{\max_{x\in\T}\rho_0(x), ~ 3\bar{\rho}, ~
\left[\frac{1}{C_1}\left(\|F_0\|_{L^\infty}+\frac{|k|}{eA_m}+\frac{|k|\bar{\rho}}{A_mC_m}\right)\right]^{1/\alpha}\right\}.\]
\end{proof}

\subsection{Local wellposedness}
With the apriori bounds, we state a local wellposedness result for the fractional
EPA system~(\ref{eq:EPASrho})-(\ref{eq:EPASu}), as well as a
Beale-Kato-Majda type necessary and sufficient condition to guarantee global
wellposedness. The local wellposedness theory has been presented in detail in
\cite{do2017global} for fractional Euler-Alignment system.
We will show that presence of the Poisson force does not seriously affect the
argument, no matter whether it is attractive or repulsive. We will only sketch the
proof, indicating changes necessary.

\begin{theorem}[Local wellposedness]\label{thm:local}
Consider EPA system~(\ref{eq:EPASrho})-(\ref{eq:EPASu}) with initial
conditions $\rho_0$ and $u_0$ that satisfy
\begin{equation}\label{eq:smoothinit}
\rho_0\in H^s(\T),\quad \min_{x\in\T}\rho_0(x)>0,\quad
\partial_xu_0-\Lambda^\alpha\rho_0\in H^{s-\frac{\alpha}{2}}(\T),
\end{equation}
with a sufficiently large even integer $s>0$. Then, there exists $T_0>0$
such that the EPA system
has a unique strong solution $\rho(x,t), u(x,t)$ on $[0,T_0]$, with
\begin{equation}\label{eq:smoothsol}
\rho\in C([0,T_0], H^s(\T))\cap L^2([0,T_0], H^{s+\frac{\alpha}{2}}(\T)),\quad
u\in C([0,T_0], H^{s+1-\alpha}(\T)).
\end{equation}
Moreover, a necessary and sufficient condition for the solution to
exist on a time interval~$[0,T]$ is
\begin{equation}\label{eq:BKM}
\int_0^T\|\partial_x\rho(\cdot,t)\|_{L^\infty}^2 dt<\infty.
\end{equation}
\end{theorem}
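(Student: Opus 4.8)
The plan is to follow the local wellposedness argument of \cite{do2017global} for the fractional Euler-Alignment system, working with the reformulated system \eqref{eq:EPASrho2}-\eqref{eq:EPASux} in the variables $(\rho,G)$, and to check that the extra Poisson forcing $-k(\rho-\bar{\rho})$ in the $G$-equation enters only at lower order. The solution spaces are fixed by the data: one seeks $\rho\in H^s$ and $G\in H^{s-\alpha/2}$, whence $\pa_xu=\Lambda^\alpha\rho+G\in H^{s-\alpha}$ and $u\in H^{s+1-\alpha}$, matching \eqref{eq:smoothsol}. Since $s$ is an even integer I would run the energy estimates with the \emph{local} operator $\pa_x^s$, so that the troublesome terms reduce to Leibniz expansions rather than nonlocal commutators with $\Lambda^s$.

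First I would produce approximate solutions by a standard regularization (adding a vanishing viscosity $\varepsilon\pa_x^2\rho$, or mollifying the nonlinearities), for which global-in-time smooth solutions exist by parabolic/ODE theory. The crux is a uniform a priori bound on $E(t):=\|\rho(\cdot,t)\|_{H^s}^2+\|G(\cdot,t)\|_{H^{s-\alpha/2}}^2$. Writing the density equation as $\pa_t\rho+u\pa_x\rho+\rho\Lambda^\alpha\rho=-\rho G$ and differentiating $s$ times, the decisive gain comes from the diffusive term: its top-order part pairs to a coercive contribution $\gtrsim\rho_m(t)\,\|\rho\|_{\dot{H}^{s+\alpha/2}}^2$, positive because $\rho$ stays bounded below on the time interval (by the argument behind Theorem \ref{thm:lower}). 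The transport commutators and the product $\rho G$ are controlled by Kato-Ponce type commutator and product estimates together with the algebra property of $H^s$, any top-order loss being absorbed into the coercive term by Young's inequality. For the $G$-equation at level $H^{s-\alpha/2}$ the transport and quadratic terms are handled exactly as in \cite{do2017global}; the only new contribution, from $-k(\rho-\bar{\rho})$, is bounded by $|k|\,\|\rho\|_{H^{s-\alpha/2}}\|G\|_{H^{s-\alpha/2}}\leq|k|\,\|\rho\|_{H^s}\|G\|_{H^{s-\alpha/2}}\lesssim|k|\,E(t)$, since $\rho$ enters through a weaker norm. This produces a differential inequality $\frac{d}{dt}E+c\rho_m\|\rho\|_{\dot{H}^{s+\alpha/2}}^2\leq \mathcal{P}(E)\,E$ with $\mathcal{P}$ increasing, yielding existence on $[0,T_0]$ with $T_0=T_0(E(0),\rho_m(0))$.

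Passing to the limit in the regularization by Aubin-Lions compactness, and recovering $u$ from $(\rho,G)$ via \eqref{eq:urec} with $I_0$ fixed by momentum conservation \eqref{eq:mom}, yields a strong solution of the stated regularity; the bound $\rho\in L^2([0,T_0],H^{s+\alpha/2})$ is precisely the time-integral of the coercive diffusion term. Uniqueness follows by estimating the difference of two solutions in a low-order norm (e.g. $\|\rho_1-\rho_2\|_{L^2}$), where the diffusion absorbs the derivative losses and the Poisson difference $-k(\rho_1-\rho_2)$ is again subordinate.

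For the Beale-Kato-Majda criterion, the necessity direction is immediate, since any solution with $\rho\in C([0,T],H^s)$ has $\|\pa_x\rho\|_{L^\infty}$ continuous, hence $\int_0^T\|\pa_x\rho\|_{L^\infty}^2\,dt<\infty$ via $H^s\hookrightarrow W^{1,\infty}$. For sufficiency I would rerun the $H^s$ estimate while tracking exactly which terms carry derivative bounds on the solution: as established in \cite{do2017global}, after exploiting the diffusion every dangerous term is dominated by $\|\pa_x\rho\|_{L^\infty}$, a typical top-order term $\|\pa_x\rho\|_{L^\infty}\|\rho\|_{H^s}\|\rho\|_{\dot{H}^{s+\alpha/2}}$ splitting under Young's inequality into $\tfrac{c\rho_m}{2}\|\rho\|_{\dot{H}^{s+\alpha/2}}^2+C\|\pa_x\rho\|_{L^\infty}^2\|\rho\|_{H^s}^2$, which explains the quadratic power in \eqref{eq:BKM}; the bound on $\|\pa_x u\|_{L^\infty}$ reduces to one on $\|\pa_x\rho\|_{L^\infty}$ through $\pa_xu=\Lambda^\alpha\rho+G$, the a priori controls on $\rho$ and $F=G/\rho$, and interpolation. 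Grönwall then keeps $E$ finite whenever \eqref{eq:BKM} holds. I expect the main obstacle to be this nonlinear-diffusion energy estimate for $\rho$: because the variable coefficient $\rho$ sits in front of $\Lambda^\alpha\rho$, one must carefully balance the commutator losses against the coercivity extracted from $\int_\T\rho\,|\Lambda^{s+\alpha/2}\rho|^2\,dx$ — exactly the delicate computation of \cite{do2017global}. The Poisson force, being genuinely lower order, does not disturb this balance, which is why the argument survives the perturbation.
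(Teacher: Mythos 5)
Your proposal is correct and follows essentially the same route as the paper: both reformulate in the $(\rho,G)$ (resp. $(\theta,G)$ with $\theta=\rho-\bar\rho$) variables, run an $H^s\times H^{s-\alpha/2}$ energy estimate whose coercive term $\rho_m\|\cdot\|_{H^{s+\alpha/2}}^2$ comes from the nonlinear diffusion, absorb commutator losses via $\|\pa_x\rho\|_{L^\infty}^2$ (giving the quadratic power in \eqref{eq:BKM}), and observe that the Poisson contribution is lower order, bounded by $|k|\,\|\theta\|_{H^{s-\alpha/2}}\|G\|_{H^{s-\alpha/2}}\lesssim |k|\,Y(t)$ exactly as in the paper's term $II$. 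The paper presents only this modification and defers the approximation scheme, compactness, and uniqueness to \cite{do2017global}, whereas you sketch those standard steps explicitly; the substance is the same.
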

\begin{proof}
We follow the proof in \cite{do2017global} and rewrite the equations
\eqref{eq:EPASrho2} and \eqref{eq:EPASG} in terms of
$(\theta, G)$ where $\theta=\rho-\bar{\rho}$.
\begin{align}
\pa_t\theta+\pa_x(\theta u)&=-\bar{\rho}\pa_xu,\label{eq:EAPtheta}\\
\pa_tG+\pa_x(Gu)&=-k\theta,\label{eq:EAPG2}
\end{align}
The velocity $u$ is defined in \eqref{eq:urec}.

Given any $T>0$, we will obtain a differential inequality on
\begin{equation}\label{eq:Y}
Y(t):=1+\|\theta(\cdot,t)\|_{H^s}^2+\|G(\cdot,t)\|_{H^{s-\frac{\alpha}{2}}}^2,
\end{equation}
for all $t\in[0,T]$.

Through a commutator estimate \cite[equation (3.23)]{do2017global}, one can get
\begin{equation}\label{eq:thetaHs}
\frac{1}{2}\frac{d}{dt}\|\theta\|_{H^s}^2\leq
C\left(1+\frac{1}{\rho_m}\right)
(1+\|\partial_x\theta\|_{L^\infty}^2+\|G\|_{L^\infty})
Y(t)-
\frac{\rho_m}{3}\|\theta\|_{H^{s+\frac{\alpha}{2}}}^2,
\end{equation}
where $\rho_m(t)$ has a positive lower bound for $t\in[0,T]$ due to
Theorem \ref{thm:lower}. Also, $\|G(\cdot,t)\|_{L^\infty}$ is bounded
for $t\in[0,T]$ as $G=F\rho$ and both $F$ and $\rho$ are bounded, see
\eqref{eq:Fbound2} and \eqref{eq:upperbound} respectively.

We also compute
\begin{equation}\label{eq:GHs}
\frac{1}{2}\frac{d}{dt}\|G\|_{\dot{H}^{s-\frac{\alpha}{2}}}^2
=-\int_\T(\Lambda^{s-\frac{\alpha}{2}}
   G)\cdot(\Lambda^{s-\frac{\alpha}{2}}\partial_x (Gu))dx
-k\int_\T(\Lambda^{s-\frac{\alpha}{2}}
   G)\cdot (\Lambda^{s-\frac{\alpha}{2}}\theta)dx=I+II.
\end{equation}
The first term can be controlled by the following estimate
\cite[equation (3.25)]{do2017global}
\begin{equation}\label{eq:GHsI}
|I|\leq
\frac{\rho_m}{6}\|\theta\|_{H^{s+\frac{\alpha}{2}}}^2 +C
\left(1+\frac{1}{\rho_m}\|G\|_{L^\infty}^2+\|\partial_x\theta\|_{L^\infty}+\|G\|_{L^\infty})\right) \|G\|_{H^{s-\frac{\alpha}{2}}}^2.
\end{equation}

The $II$ term encodes the contribution of the attractive-repulsive
potential. We have the following estimate
\begin{equation}\label{eq:GHsII}
|II|\leq
|k|\|G\|_{\dot{H}^{s-\frac{\alpha}{2}}}\|\theta\|_{\dot{H}^{s-\frac{\alpha}{2}}}
\leq C|k|\|G\|_{\dot{H}^{s-\frac{\alpha}{2}}}\|\theta\|_{H^s}\leq C|k|Y(t).
\end{equation}
Combine \eqref{eq:thetaHs}, \eqref{eq:GHsI} and \eqref{eq:GHsII}, we
get
\begin{equation}\label{eq:YHs}
\frac{d}{dt}Y(t)\leq C(1+\|\pa_x\theta(\cdot,t)\|_{L^\infty}^2)Y(t)-\frac{\rho_m(t)}{6}\|\theta\|_{H^{s+\frac{\alpha}{2}}}^2,
\end{equation}
where $C$ is a positive constant which might depend on $T$.

Applying Gronwall's inequality, we get
\begin{equation}\label{eq:gronwall}
Y(t)+\frac{1}{6}\min_{t\in[0,T]}\rho_m(t)\|\theta\|_{L^2([0,T];H^{s+\frac{\alpha}{2}}(\T))}^2\leq
Y(0)\exp\left[C(T)\int_0^T(1+\|\pa_x\theta(\cdot,s)\|_{L^\infty}^2)ds\right],
\end{equation}
for all $t\in[0,T]$.
The right hand side is bounded as long as condition \eqref{eq:BKM} is
satisfied. Therefore,
\[\theta\in C([0,T], H^s(\T))\cap L^2([0,T], H^{s+\frac{\alpha}{2}}(\T)),\quad
G\in C([0,T], H^{s-\frac{\alpha}{2}}(\T)).\]
This directly implies the regularity conditions on $\rho$ in
\eqref{eq:smoothsol}. The regularity conditions on $u$ can also be easily
obtained from \eqref{eq:urec}.
\end{proof}

\subsection{Global wellposedness}\label{sec:global}
In this section, we prove that the Beale-Kato-Majda type condition
\eqref{eq:BKM} holds for any finite time $T$. This will imply global
wellposedness of the fractional EPA system and hence finish the proof
of Theorem \ref{thm:EAP}. Throughout the section, we fix a time $T>0$
(which is arbitrary).

To derive a uniform $L^\infty$ bound on $\pa_x\rho$, we argue that
$\rho(\cdot,t)$ will obey certain modulus of continuity for
$t\in[0,T]$. Such method has been successfully used to obtain global
regularity for 2D quasi-geostrophic equation with critical dissipation
\cite{kiselev2007global}, fractal Burgers equation
\cite{kiselev2008blow}, as well as fractional Euler-Alignment system
\cite{do2017global}. In all these examples, the solution has a certain
scaling invariance property. Unfortunately, such property is not
available for the fractional EPA system
\eqref{eq:EPASrho}-\eqref{eq:EPASu}. We note that the modulus method
has been applied to subcritical perturbations destroying scaling before
(e.g. \cite{kiselev2010global}). The argument in \cite{kiselev2010global}, however, relies on the specific
structure of the perturbation, and cannot be readily ported to other settings.
A novel feature compared to both \cite{kiselev2010global} and \cite{do2017global} will be
dependence of the modulus on time. This feature is linked to the possible decay
of $\rho_m$ and growth of $\|\rho\|_{L^\infty},$ and appears to be an intrinsic
property of the problem.

We use the same family of moduli of continuity as in \cite{do2017global},
\begin{equation}\label{eq:mocf}
\omega(\xi)=\begin{cases}\xi-\xi^{1+{\alpha}/{2}},&0\le\xi<\delta \leq 1\\
\gamma\log(\xi/\delta)+\delta-\delta^{1+\alpha/2},&\xi\geq\delta,
\end{cases}
\end{equation}
where $\gamma, \delta$ are small constants to be determined. Set
$\omega_B(\xi)=\omega(B\xi)$, where $B$ is a large constant to be
determined as well. Due to lack of scaling invariance, we
will work directly on $\omega_B$.
\begin{equation}\label{eq:omegaB}
\omega_B(\xi)=\begin{cases}B\xi-(B\xi)^{1+{\alpha}/{2}},&0\le\xi<B^{-1}\delta\\
\gamma\log\frac{B\xi}{\delta}+\delta-\delta^{1+\alpha/2},&\xi\geq B^{-1}\delta,
\end{cases}
\end{equation}

We say that a function $f$ obeys modulus of continuity $\omega$ if
\begin{equation}\label{eq:defmod}
|f(x)-f(y)| < \omega(|x-y|),\quad\forall~x,y\in\T.
\end{equation}

Our plan is to find a $\omega_B$ such that $\rho(\cdot,t)$ obeys
$\omega_B$ for all $t\in[0,T]$. To construct $\omega_B$, we will first
choose $\delta$ and $\gamma$ which
depend on initial conditions and $T$, but not on $B$.
Then, we will choose $B$ that depend on $T,\delta,\gamma$ as well as initial conditions.

First, we would like to make sure that $\rho_0$ obeys $\omega_B$.

\begin{lemma}\label{lem:initmod}
Let $\rho_0\in C^1(\T)$. Then, $\rho_0$ obeys $\omega_B$ if
\begin{equation}\label{eq:deltagammaBinit}
\delta<\frac{2\|\rho_0\|_{L^\infty}}{\|\pa_x\rho_0\|_{L^\infty}},\quad
B>\frac{\delta\|\pa_x\rho_0\|_{L^\infty}}{2\|\rho_0\|_{L^\infty}}\exp\left(\frac{2\|\rho_0\|_{L^\infty}}{\gamma}\right).
\end{equation}
\end{lemma}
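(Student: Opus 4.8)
The plan is to verify the defining inequality $|\rho_0(x)-\rho_0(y)| < \omega_B(|x-y|)$ directly, exploiting the two elementary bounds available for a $C^1$ function on the torus: the Lipschitz bound $|\rho_0(x)-\rho_0(y)|\le \|\pa_x\rho_0\|_{L^\infty}|x-y|$ (with $|x-y|$ the geodesic distance on $\T$) and the oscillation bound $|\rho_0(x)-\rho_0(y)|\le 2\|\rho_0\|_{L^\infty}$. Abbreviating $L=\|\pa_x\rho_0\|_{L^\infty}$ and $M=\|\rho_0\|_{L^\infty}$, it therefore suffices to prove $\omega_B(\xi) > \min\{L\xi,\,2M\}$ for every $\xi\in(0,1/2]$. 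The two competing bounds cross at $\xi_*=2M/L$, and the hypothesis $\delta<2M/L$ guarantees that $\xi_*$ sits in the logarithmic branch of $\omega_B$, since $B^{-1}\delta<\delta<\xi_*$.

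First I would dispose of the small scales $0<\xi<B^{-1}\delta$, where $\omega_B(\xi)=B\xi\bigl(1-(B\xi)^{\alpha/2}\bigr)\ge B\xi(1-\delta^{\alpha/2})$ because $B\xi<\delta\le 1$. As the stated lower bound on $B$ is exponentially large, one certainly has $B(1-\delta^{\alpha/2})>L$, so $\omega_B(\xi)>L\xi$; moreover here $L\xi<LB^{-1}\delta<L\delta<2M$, so $\min\{L\xi,2M\}=L\xi$ and this range is settled. The crux is the logarithmic branch, and the whole weight of the argument falls on testing the single point $\xi_*=2M/L$: the requirement $\omega_B(\xi_*)>2M$ reads $\gamma\log(B\xi_*/\delta)+\delta-\delta^{1+\alpha/2}>2M$, and discarding the positive term $\delta-\delta^{1+\alpha/2}$ reduces it to $\log\bigl(2MB/(L\delta)\bigr)>2M/\gamma$, which is exactly the stated lower bound on $B$. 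This single inequality is the only genuine obstacle, and it is what pins down the exponential dependence of $B$ on $M/\gamma$.

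To propagate this one inequality across the whole logarithmic branch without checking a continuum of points, I would invoke concavity. The function $h(\xi):=\omega_B(\xi)-L\xi$ is concave there, since $h''=-\gamma/\xi^2<0$, and its two endpoint values are strictly positive: at $\xi=B^{-1}\delta$ one has $h(B^{-1}\delta)=\delta-\delta^{1+\alpha/2}-LB^{-1}\delta>0$ (again by the largeness of $B$), while at $\xi=\xi_*$ one has $h(\xi_*)=\omega_B(\xi_*)-2M>0$ by the key estimate. Because a concave function dominates the chord joining its endpoints, $h>0$ on all of $[B^{-1}\delta,\xi_*]$, which gives $\omega_B(\xi)>L\xi=\min\{L\xi,2M\}$ throughout that interval. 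Finally, for $\xi>\xi_*$ (the part of $(0,1/2]$ beyond the crossing, when nonempty) the logarithmic branch of $\omega_B$ is increasing, so $\omega_B(\xi)>\omega_B(\xi_*)>2M=\min\{L\xi,2M\}$; and if instead $\xi_*>1/2$ this last range is empty and the concavity estimate on $[B^{-1}\delta,\xi_*]$ already covers all torus distances $\xi\le 1/2$. Collecting the three ranges proves $\omega_B(\xi)>\min\{L\xi,2M\}$ everywhere, which is the claim.
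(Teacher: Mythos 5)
Your proof is correct and follows essentially the same strategy as the paper: reduce matters to showing $\omega_B(\xi)>\min\{L\xi,\,2M\}$ with $L=\|\pa_x\rho_0\|_{L^\infty}$, $M=\|\rho_0\|_{L^\infty}$, and identify the single-point inequality $\omega_B(2M/L)>2M$ as the only genuine constraint, which is exactly where the exponential lower bound on $B$ in \eqref{eq:deltagammaBinit} comes from. The one difference is in how that single inequality is propagated. The paper uses concavity of $\omega_B$ on all of $[0,\xi_*]$ anchored at the origin: since $\omega_B(0)=0$ and $\omega_B(\xi_*)>2M=L\xi_*$, the chord from $(0,0)$ to $(\xi_*,\omega_B(\xi_*))$ already dominates $L\xi$ on the whole interval, covering both branches at once. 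You instead treat the power branch and the logarithmic branch separately, which costs you two extra positivity claims --- $B(1-\delta^{\alpha/2})>L$ on $(0,B^{-1}\delta)$ and $h(B^{-1}\delta)>0$ --- that you dismiss with ``the stated lower bound on $B$ is exponentially large.'' Strictly, \eqref{eq:deltagammaBinit} alone does not yield these (for large $\gamma$ the stated lower bound on $B$ can even be below $1$); they do follow from the standing smallness condition $\gamma\le\delta-\delta^{1+\alpha/2}$ together with $e^x\ge x$, and that same smallness is what makes $\omega_B$ concave across the breakpoint, so it is implicitly assumed in both proofs. In context your argument is therefore sound, but the paper's single-chord version is cleaner because it requires no separate small-scale check.
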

\begin{proof}
We start with an elementary inequality
\begin{equation}\label{eq:rhoele}
|\rho_0(x)-\rho_0(y)|\leq\min\{2\|\rho_0\|_{L^\infty},
\|\pa_x\rho_0\|_{L^\infty}|x-y|\}.
\end{equation}
As $\omega_B$ is concave and monotone increasing, the right hand side
of \eqref{eq:rhoele} is bounded by $\omega_B(|x-y|)$ if
\begin{equation}\label{eq:modinit}
\omega_B\left(\frac{2\|\rho_0\|_{L^\infty}}{\|\pa_x\rho_0\|_{L^\infty}}\right)
>2\|\rho_0\|_{L^\infty}.
\end{equation}
Since $\omega_B(\xi)\to+\infty$ as $B\to+\infty$, \eqref{eq:modinit}
is satisfied by taking $B$ large enough. Indeed, if $\delta$ and $B$ satisfy
\eqref{eq:deltagammaBinit}, then
\[\omega_B\left(\frac{2\|\rho_0\|_{L^\infty}}{\|\pa_x\rho_0\|_{L^\infty}}\right)
>\gamma\log\left(\frac{2B\|\rho_0\|_{L^\infty}}{\delta\|\pa_x\rho_0\|_{L^\infty}}\right)>
2\|\rho_0\|_{L^\infty}.\]
\end{proof}

The following lemma describes the only possible breakthrough scenario
for the modulus.
\begin{lemma}\label{lem:breakthrough}
Suppose $\rho_0$ obeys a modulus of continuity $\omega_B$ as in \eqref{eq:omegaB}.
If the solution~$\rho(x,t)$ violates
$\omega_B$ at some positive time, then there must exist $t_1>0$ and
$x_1\neq y_1$ such that
\begin{equation}\label{eq:breakthrough}
\rho(x_1,t_1)-\rho(y_1,t_1)=\omega_B(|x_1-y_1|),
\hbox{  and $\rho(\cdot,t)$ obeys $\omega_B$ for every $0\leq t<t_1$.}
\end{equation}
\end{lemma}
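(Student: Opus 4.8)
The plan is to run the standard first-breakthrough argument of the modulus of continuity method, paying special attention to the fact that $\omega_B$ has \emph{finite} slope $\omega_B'(0^+)=B$ at the origin, which is the one genuinely non-routine point.

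First I would introduce the set of ``good times''
\[
S=\{\tau\in[0,T]:\ \rho(\cdot,t)\text{ obeys }\omega_B\text{ for all }t\in[0,\tau]\},
\]
and set $t_1=\sup S$. As long as $\rho$ obeys $\omega_B$ one has $\|\pa_x\rho\|_{L^\infty}\le B$, so by the Beale--Kato--Majda criterion \eqref{eq:BKM} the solution stays smooth; in particular $\rho$ is $C^1$ in $x$ and continuous in $t$ up to and slightly past $t_1$, and $S$ is an interval containing $0$, whence $[0,t_1)\subseteq S$. This already yields the second assertion in \eqref{eq:breakthrough}. The hypothesis that $\rho_0$ obeys $\omega_B$ \emph{strictly}, together with the uniform-gap argument below, shows $t_1>0$, while the hypothesis that $\rho$ violates $\omega_B$ at some positive time forces $t_1<\infty$.

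Next, letting $t\to t_1^-$ and using continuity in time, the strict inequalities valid on $[0,t_1)$ pass to the non-strict bound $|\rho(x,t_1)-\rho(y,t_1)|\le\omega_B(|x-y|)$ for all $x,y$. The remaining task is to exhibit a genuine touching pair $x_1\ne y_1$ at $t=t_1$, and I would argue by contradiction: suppose the inequality were strict for every $x\ne y$ at time $t_1$; I then want the strict modulus to persist on a short interval $[t_1,t_1+\epsilon)$, contradicting $t_1=\sup S$. For distances bounded below this is immediate, since on the compact set $\{(x,y):|x-y|\ge r\}$ the continuous function $\omega_B(|x-y|)-|\rho(x,t)-\rho(y,t)|$ is positive at $t_1$ and hence stays positive for $t$ near $t_1$.

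The hard part, and the reason the degenerate case $x_1=y_1$ must be excluded by hand, is the small-distance regime $|x-y|\to0$: because $\omega_B'(0^+)=B$ is finite and $\omega_B(\xi)=B\xi-(B\xi)^{1+\alpha/2}<B\xi$, a naive Lipschitz bound does not beat $\omega_B$ near the diagonal. The key observation I would use is that strictness at $t_1$ \emph{forces} $\|\pa_x\rho(\cdot,t_1)\|_{L^\infty}<B$: if instead $\pa_x\rho(x_0,t_1)=\pm B$ at some $x_0$, then Taylor expansion on the appropriate side gives $\rho(x_0,t_1)-\rho(y,t_1)=B|x_0-y|+O(|x_0-y|^2)$, and since the concavity correction $(B|x_0-y|)^{1+\alpha/2}$ is of strictly lower order than $|x_0-y|^2$ (as $1+\alpha/2<2$) it dominates, producing $\rho(x_0,t_1)-\rho(y,t_1)>\omega_B(|x_0-y|)$ and contradicting strictness. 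Setting $L:=\|\pa_x\rho(\cdot,t_1)\|_{L^\infty}<B$, the mean value theorem gives $|\rho(x,t_1)-\rho(y,t_1)|\le L|x-y|<\omega_B(|x-y|)$ once $|x-y|^{\alpha/2}<(B-L)B^{-1-\alpha/2}$, and continuity of $\pa_x\rho$ in time makes this small-scale bound uniform for $t$ near $t_1$. Combining the two regimes yields a uniform positive gap on a time interval around $t_1$, the desired contradiction. Hence a touching pair $x_1\ne y_1$ exists, and after possibly swapping $x_1$ and $y_1$ we obtain $\rho(x_1,t_1)-\rho(y_1,t_1)=\omega_B(|x_1-y_1|)$. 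The identical uniform-gap reasoning applied at $t=0$ gives $t_1>0$, completing the proof.
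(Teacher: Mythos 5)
Your proposal is correct and follows essentially the same route the paper relies on: the paper does not write out a proof but points precisely to the mechanism you implement, namely that $\omega_B''(0)=-\infty$ (the concavity correction $(B\xi)^{1+\alpha/2}$ with $1+\alpha/2<2$) rules out breakthrough via $|\pa_x\rho|$ reaching $B$ at a single point, so the first violation must occur at two distinct touching points; this is the argument of \cite{kiselev2007global}. Your continuity-in-time and compactness/uniform-gap details are the standard way to make that rigorous and are consistent with the regularity provided by Theorem \ref{thm:local}.
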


The main point of the lemma is the existence of two distinct points where the solution touches the
modulus (as opposed to a single point $x$ with $|\nabla \rho(x)| = \omega'_B(0)=B$).
This property is a consequence of $\omega''_B(0) = -\infty;$ see \cite{kiselev2007global} for more
details.

We will show that in the breakthrough scenario as above,
\begin{equation}\label{eq:nobreak}
\pa_t(\rho(x_1,t_1)-\rho(y_1,t_1))<0,\quad\forall~ t_1\in(0,T],
\end{equation}
achieving a contradiction with the choice of time $t_1$ - and thus showing
that the modulus $\omega_B$ cannot be broken.
Together
with Lemma \ref{lem:initmod} this implies that
$\rho(\cdot,t)$ obeys
$\omega_B$ for all $t\in[0,T]$. Therefore,
\begin{equation}\label{eq:rhoxbound}
\|\pa_x\rho(\cdot,t)\|_{L^\infty}\leq\omega_B'(0)=B,\quad\forall~t\in[0,T].
\end{equation}
This proves global regularity of the fractional EPA system and ends the proof of  Theorem \ref{thm:EAP}.

The rest of the section is devoted to proof of \eqref{eq:nobreak}.
We fix $t_1$ and drop the time variable for simplicity.
Let $\xi=|x_1-y_1|$. Then
\begin{equation}\label{eq:rhoxi}
\begin{split}
\partial_t&(\rho(x_1)-\rho(y_1))=
-\partial_x(\rho(x_1)u(x_1))+\partial_x(\rho(y_1)u(y_1))\\
&=-\big(u(x_1)\partial_x\rho(x_1)-u(y_1)\partial_x\rho(y_1)\big)
-\big(\rho(x_1)-\rho(y_1)\big)\partial_xu(x_1)
-\rho(y_1)\big(\partial_xu(x_1)-\partial_xu(y_1)\big)\\
&=I+II+III.
\end{split}
\end{equation}

Decompose $u$ into two parts $u=u_1+u_2$ where
\begin{equation}\label{eq:u1u2}
u_1(x)=\Lambda^\alpha\pa_x^{-1}(\rho(x)-\bar{\rho}),\quad
u_2(x)=\pa_x^{-1}G(x)+I_0.
\end{equation}
Then, we can write \eqref{eq:rhoxi} as
\begin{equation}\label{eq:rhoxi2}
\partial_t(\rho(x_1)-\rho(y_1))=I_1+II_1+III_1+I_2+II_2+III_2,
\end{equation}
where $I_1, II_1, III_1$ represent the contributions from $u_1$, and
$I_2, II_2, III_2$ represent the contribution from $u_2$.

For $I_1, II_1, III_1$, we proceed with an argument parallel to
\cite{do2017global}. Let us recall the result.
The following quantities play a role in the proof:
\[ \Omega(\xi) = c_{1,\alpha} \left( \int_0^\xi \frac{\omega(\eta)}{\eta^\alpha} \,d\eta +
\xi \int_{\xi}^\infty \frac{\omega(\eta)}{\eta^{1+\alpha}}\,d\eta \right); \]
\[ A(\xi) = c_{2,\alpha} \int_{\R} \frac{\omega(\xi) - \omega(|\xi-\eta|)}{|\eta|^{1+\alpha}}\,d\eta; \]
{\small\[ D(\xi) = c_{3,\alpha} \left( \int_0^{\xi/2} \frac{2\omega(\xi) - \omega(\xi+2\eta)-\omega(\xi-2\eta)}{\eta^{1+\alpha}}\,d\eta
+\int_{\xi/2}^\infty \frac{2\omega(\xi) - \omega(\xi+2\eta)+\omega(2\eta-\xi)}{\eta^{1+\alpha}}\,d\eta \right). \]}

\begin{lemma}[{\cite[Lemma 4.4 and 4.5]{do2017global}}]\label{lem:homopart1}
Let $\rho(\cdot,t)$ obey the modulus of continuity $\omega$ as in
\eqref{eq:mocf} for $0\leq t<t_1\leq T$, and let $x_1$, $y_1$
be the breakthrough points at the first breakthrough time $t_1$.
Suppose $\delta$ and $\gamma$ are small constants such that
\begin{equation}\label{eq:deltagamma}
\delta<1,\quad\gamma\leq\frac{\delta-\delta^{1+\alpha/2}}{2\log2}.
\end{equation}
Then, there exist positive constants $C_I$, $C_{II}$ and $C_{III}$, which may
only depend on $\alpha$, such that
\begin{align}
|I_1|\leq&~\omega'(\xi)\Omega(\xi),\quad\text{where }
\Omega(\xi)\leq
\begin{cases}
C_I\xi,&0<\xi<\delta,\\
C_I\xi^{1-\alpha}\omega(\xi),&\xi\geq\delta.
\end{cases}\label{eq:I1homo}\\
II_1\leq&~\omega(\xi)A(\xi),\quad\text{where }
A(\xi) \leq
\begin{cases}
C_{II},&0<\xi<\delta,\\
C_{II}\gamma\xi^{-\alpha},&\xi\geq\delta.
\end{cases}\label{eq:I2homo}\\
III_1\leq&-\rho_mD(\xi),\quad\text{where }
D(\xi) \geq
\begin{cases}
C_{III}\xi^{1-\alpha/2},&0<\xi<\delta,\\
C_{III}\omega(\xi)\xi^{-\alpha},&\xi\geq\delta.
\end{cases}\label{eq:I3homo}
\end{align}
\end{lemma}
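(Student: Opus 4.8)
The plan is to reduce the three terms $I_1,II_1,III_1$ of \eqref{eq:rhoxi2} to singular integrals against the kernel $|\cdot|^{-1-\alpha}$ and then to exploit the rigid geometry forced at a first breakthrough. Fixing the convention $x_1>y_1$ with $\rho(x_1)-\rho(y_1)=\omega(\xi)>0$, the fact that $t_1$ is the \emph{first} time the modulus is touched means $\rho(x_1+h)-\rho(y_1+h)$ is maximized at $h=0$ while $\rho$ still obeys $\omega$; this yields the matching-slope relations $\pa_x\rho(x_1)=\pa_x\rho(y_1)=\omega'(\xi)$. Since \eqref{eq:u1u2} gives $\pa_x u_1=\Lambda^\alpha(\rho-\bar{\rho})=\Lambda^\alpha\rho$, the three terms collapse to
\[
I_1=-\omega'(\xi)\big(u_1(x_1)-u_1(y_1)\big),\qquad
II_1=-\omega(\xi)\,\Lambda^\alpha\rho(x_1),\qquad
III_1=-\rho(y_1)\big(\Lambda^\alpha\rho(x_1)-\Lambda^\alpha\rho(y_1)\big).
\]
The problem thus becomes estimating one velocity increment and the value and increment of $\Lambda^\alpha\rho$ at the breakthrough points, using only that $\rho$ obeys $\omega$ and $\rho\ge\rho_m$.

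For $I_1$ I would use the convolution kernel of $u_1=\Lambda^\alpha\pa_x^{-1}(\rho-\bar{\rho})$: the symbol $-i\,\mathrm{sign}(\zeta)|\zeta|^{\alpha-1}$ corresponds to an antisymmetric kernel $K(w)\sim\mathrm{sign}(w)|w|^{-\alpha}$ after periodization. Writing $u_1(x_1)-u_1(y_1)=\int\big(K(x_1-z)-K(y_1-z)\big)(\rho(z)-\bar{\rho})\,dz$ and recentering so that only $\rho$-increments controlled by $\omega$ appear, I would split into the near zone $|z|\lesssim\xi$, where $|K|\sim|w|^{-\alpha}$, and the far zone $|z|\gtrsim\xi$, where the kernel difference gains a factor $\xi|w|^{-1-\alpha}$ by the mean value theorem. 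These two contributions reproduce exactly the two pieces defining $\Omega(\xi)$, giving $|I_1|\le\omega'(\xi)\Omega(\xi)$. For $II_1$ I would write $\Lambda^\alpha\rho(x_1)=c_\alpha\,\mathrm{P.V.}\int\big(\rho(x_1)-\rho(x_1+\eta)\big)|\eta|^{-1-\alpha}\,d\eta$ and bound it from below: the breakthrough identity together with the modulus gives a pointwise comparison of the integrand with a second difference of $\omega$ (the linear parts near $\eta=0$ cancel thanks to $\pa_x\rho(x_1)=\omega'(\xi)$, so the principal value is preserved), and integrating, after using evenness of the kernel, reproduces the $A(\xi)$ estimate for $II_1$. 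For $III_1$ I would form $\Lambda^\alpha\rho(x_1)-\Lambda^\alpha\rho(y_1)$, symmetrize the integrand under $\eta\mapsto-\eta$, and bound the resulting second differences of $\rho$ below by the corresponding second differences of $\omega$ (the worst case being $\rho$ saturating the modulus); this is exactly the computation producing $D(\xi)\ge0$, and since $\rho(y_1)\ge\rho_m$ it gives $III_1\le-\rho_m D(\xi)$.

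Finally I would evaluate $\Omega$, $A$ and $D$ in the regimes $0<\xi<\delta$ and $\xi\ge\delta$ from the explicit form \eqref{eq:mocf} of $\omega$, with \eqref{eq:deltagamma} guaranteeing concavity and monotonicity throughout. I expect the genuine difficulty to be the dissipation lower bound $D(\xi)$. For $\xi<\delta$ one needs $D(\xi)\ge C_{III}\xi^{1-\alpha/2}$: the linear part of $\omega$ contributes nothing to the second difference, and all the coercivity comes from the concave correction $-\xi^{1+\alpha/2}$, through the positive defect $(\xi+2\eta)^{1+\alpha/2}+(\xi-2\eta)^{1+\alpha/2}-2\xi^{1+\alpha/2}$, whose integral against $\eta^{-1-\alpha}$ over $(0,\xi/2)$ produces precisely the power $\xi^{1-\alpha/2}$; this is the structural reason the correction is built into $\omega$. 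For $\xi\ge\delta$ one needs $D(\xi)\ge C_{III}\omega(\xi)\xi^{-\alpha}$, where the far integral carrying the $\omega(2\eta-\xi)$ term must be treated carefully so as not to lose the slow logarithmic growth of $\omega$. The estimates for $I_1$ and $II_1$ are comparatively soft, their only subtlety being the periodization of the non-integrable tail of the $\Lambda^\alpha\pa_x^{-1}$ kernel, which is harmless because $\rho-\bar{\rho}$ has zero mean. Crucially, $u_1$ depends on $\rho$ alone --- the Poisson and potential forces enter only through $G$, hence through $u_2$ --- so these three estimates are structurally identical to the fractional Euler-Alignment case, and the computation of \cite{do2017global} carries over once the reductions above are in place.
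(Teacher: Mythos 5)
Your outline is correct and follows the same Kiselev--Nazarov--Volberg-type argument that the paper invokes by citing \cite{do2017global}: the first-breakthrough identities $\pa_x\rho(x_1)=\pa_x\rho(y_1)=\omega'(\xi)$, the near/far kernel splitting for $u_1=\Lambda^{\alpha}\pa_x^{-1}(\rho-\bar{\rho})$ producing $\Omega$, the one-sided comparison of $\rho$-increments with $\omega$-increments producing $A$, and the symmetrized second-difference lower bound producing $D\geq 0$ (so that $\rho(y_1)\geq\rho_m$ can be inserted), with the $\xi<\delta$ coercivity coming precisely from the $-\xi^{1+\alpha/2}$ correction as you say. The only (inconsequential) slip is describing the $II_1$ estimate as a second-difference comparison --- it is the first-difference bound $\rho(x_1+\eta)-\rho(x_1)\leq\omega(|\xi+\eta|)-\omega(\xi)$ --- and your closing observation that $u_1$ depends on $\rho$ alone, so the Poisson/potential forcing never enters these three terms, is exactly the reason the paper can quote the lemma from \cite{do2017global} unchanged.
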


Applying the proof of Lemma \ref{lem:homopart1} to the modulus of
continuity $\omega_B$, we get the following estimates.
\begin{lemma}\label{lem:homopart}
Let $\rho(\cdot,t)$ obey the modulus of continuity $\omega_B$ as in
\eqref{eq:omegaB} for $0\leq t<t_1\leq T$, and let $x_1$, $y_1$
be the breakthrough points at the first breakthrough time $t_1$, as in
\eqref{eq:breakthrough}.
Suppose $\delta$ and $\gamma$ are small constants satisfying
\eqref{eq:deltagamma}.
Then there exist positive constants $C_2$ and $C_3$, which may only
depend on $\alpha$, such that
\begin{equation}\label{eq:I1I2}
|I_1|, ~II_1\leq\begin{cases}C_2B^{1+\alpha}\xi,&0<\xi<B^{-1}\delta,\\
C_2\gamma\omega_B(\xi)\xi^{-\alpha},&\xi\geq B^{-1}\delta,
\end{cases}
\end{equation}
and
\begin{equation}\label{eq:I3}
III_1\leq -\rho_m D_B(\xi),~~
D_B(\xi):=\begin{cases}C_3B^{1+{\alpha}/{2}}\xi^{1-{\alpha}/{2}},&0<\xi< B^{-1}\delta,\\
C_3{\omega_B(\xi)}{\xi^{-\alpha}},&\xi\geq B^{-1}\delta.
\end{cases}
\end{equation}
\end{lemma}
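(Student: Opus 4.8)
The plan is to reduce everything to Lemma \ref{lem:homopart1} by an exact rescaling. The first observation is that the bounds
\[
|I_1|\le \omega'(\xi)\,\Omega(\xi),\qquad II_1\le \omega(\xi)\,A(\xi),\qquad III_1\le -\rho_m D(\xi)
\]
established in the proof of Lemma \ref{lem:homopart1} use only that $\omega$ is an increasing concave modulus together with the convolution structure of the operator $\Lambda^\alpha\pa_x^{-1}$ defining $u_1$ in \eqref{eq:u1u2}; they never invoke the particular location $\delta$ of the kink. Consequently the identical derivation, carried out with $\omega$ replaced by $\omega_B$, yields $|I_1|\le \omega_B'(\xi)\,\Omega_B(\xi)$, $II_1\le \omega_B(\xi)\,A_B(\xi)$, and $III_1\le -\rho_m D_B(\xi)$, where $\Omega_B,A_B,D_B$ denote $\Omega,A,D$ with $\omega$ replaced by $\omega_B$. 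Since the constants $\delta,\gamma$ entering the formula \eqref{eq:omegaB} for $\omega_B$ are unchanged, the hypotheses \eqref{eq:deltagamma} are exactly what is needed.

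The second step is to record the scaling identities. Because $\omega_B(\eta)=\omega(B\eta)$, the substitution $\zeta=B\eta$ in each defining integral, collecting the powers of $B$ produced by $d\eta=B^{-1}\,d\zeta$ and by the kernel weights $\eta^{-\alpha}$, $|\eta|^{-1-\alpha}$, gives
\[
\Omega_B(\xi)=B^{\alpha-1}\,\Omega(B\xi),\qquad A_B(\xi)=B^{\alpha}\,A(B\xi),\qquad D_B(\xi)=B^{\alpha}\,D(B\xi),
\]
together with $\omega_B'(\xi)=B\,\omega'(B\xi)$. The case boundary $\xi=B^{-1}\delta$ in \eqref{eq:omegaB} corresponds precisely to $B\xi=\delta$, so the two regimes of Lemma \ref{lem:homopart1} translate into the two regimes claimed in \eqref{eq:I1I2}--\eqref{eq:I3}.

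Finally I would substitute the bounds of Lemma \ref{lem:homopart1} evaluated at argument $B\xi$ and track the powers of $B$. For $0<\xi<B^{-1}\delta$ one has $\omega'(B\xi)\le1$ and $\Omega(B\xi)\le C_I B\xi$, whence $|I_1|\le B\cdot 1\cdot B^{\alpha-1}C_I B\xi=C_I B^{1+\alpha}\xi$; similarly $A(B\xi)\le C_{II}$ with $\omega_B(\xi)=\omega(B\xi)\le B\xi$ gives $II_1\le C_{II}B^{1+\alpha}\xi$, while $D(B\xi)\ge C_{III}(B\xi)^{1-\alpha/2}$ gives $D_B(\xi)\ge C_{III}B^{1+\alpha/2}\xi^{1-\alpha/2}$. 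For $\xi\ge B^{-1}\delta$ one uses $\omega'(B\xi)=\gamma/(B\xi)$, $\Omega(B\xi)\le C_I(B\xi)^{1-\alpha}\omega(B\xi)$, $A(B\xi)\le C_{II}\gamma(B\xi)^{-\alpha}$, and $D(B\xi)\ge C_{III}\omega(B\xi)(B\xi)^{-\alpha}$; in each case the factor $B^{\alpha}(B\xi)^{-\alpha}=\xi^{-\alpha}$ collapses, leaving $C_2\gamma\,\omega_B(\xi)\xi^{-\alpha}$ for $|I_1|,II_1$ and $C_3\,\omega_B(\xi)\xi^{-\alpha}$ for $D_B$.

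The computations above are routine once the scaling is in place, so the only genuinely delicate point is the claim in the first paragraph: one must verify that the proof of Lemma \ref{lem:homopart1} transfers to $\omega_B$ with \emph{no} new input, i.e. that none of its estimates secretly exploited the normalization $\omega'(0)=1$ in place of $\omega_B'(0)=B$, and that no estimate tacitly assumed the modulus kink sits at $\delta$ rather than at $B^{-1}\delta$. Provided the derivation there is genuinely local in these respects — as the structure of $\Omega,A,D$ suggests — the lemma follows as a direct rescaling, and the $B$-power bookkeeping is the full content of the proof.
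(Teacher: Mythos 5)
Your proposal is correct and is essentially identical to the paper's own proof: the paper likewise rederives the bounds $|I_1|\leq\omega_B'(\xi)\Omega_B(\xi)$, $II_1\leq\omega_B(\xi)A_B(\xi)$, $III_1\leq-\rho_m D_B(\xi)$ by running the argument of Lemma \ref{lem:homopart1} with $\omega_B$, records the same scaling identities $\Omega_B(\xi)=B^{\alpha-1}\Omega(B\xi)$, $A_B(\xi)=B^{\alpha}A(B\xi)$, $D_B(\xi)=B^{\alpha}D(B\xi)$, and concludes with $C_2=\max\{C_I,C_{II}\}$, $C_3=C_{III}$. Your explicit $B$-power bookkeeping in both regimes matches what the paper leaves implicit.
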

\begin{proof}
Through the same proof of Lemma \ref{lem:homopart1} and replacing
$\omega$ by $\omega_B$, one can obtain the following estimates similar to
\eqref{eq:I1homo}, \eqref{eq:I2homo} and \eqref{eq:I3homo}:
\[|I_1|\leq\omega_B'(\xi)\Omega_B(\xi),\quad
II_1\leq\omega_B(\xi)A_B(\xi),\quad
III_1\leq-\rho_mD_B(\xi).\]
Here $\omega_B$ is defined in \eqref{eq:omegaB}, and
\[\Omega_B(\xi)=B^{\alpha-1}\Omega(B\xi),\quad
A_B(\xi)=B^\alpha A(B\xi),\quad
D_B(\xi)=B^\alpha D(B\xi).\]
This directly implies \eqref{eq:I1I2} and \eqref{eq:I3} with
$C_2=\max\{C_I, C_{II}\}$ and $C_3=C_{III}$.
\end{proof}

If we pick $\delta$ small enough so that
\begin{equation}\label{eq:delta}
\delta<\left(\frac{C_3}{4C_2}\rho_m(T)\right)^{2/\alpha},
\end{equation}
then
\[
C_2B^{1+\alpha}\xi\leq C_2
B^{1+\alpha/2}\xi^{1-\alpha/2}\delta^{\alpha/2}\leq\frac{1}{4}\rho_mD_B(\xi),\quad
\forall~\xi\in(0,B^{-1}\delta).
\]
Also, pick $\gamma$ small enough so that
\begin{equation}\label{eq:gamma}
\gamma<\frac{C_3}{4C_2}\rho_m(T),
\end{equation}
then
\[C_2\gamma\omega_B(\xi)\xi^{-\alpha}\leq\frac{1}{4}\rho_mD_B(\xi),\quad
\forall~\xi\geq B^{-1}\delta.\]
Therefore, we have
\begin{equation}\label{eq:est1}
I_1+II_1+III_1\leq -\frac{1}{2}\rho_mD_B(\xi).
\end{equation}

It remains to control $I_2, II_2$ and $III_3$. We start with the
estimate on $I_2$.

\begin{lemma}\label{lem:drift}
Let $\rho(\cdot,t)$ obey the modulus of continuity $\omega_B$ as in
\eqref{eq:omegaB} for $0\leq t<t_1\leq T$, and let $x_1$, $y_1$
be the breakthrough points at the first breakthrough time $t_1$, as in
\eqref{eq:breakthrough}.
Suppose $\delta$ and $\gamma$ satisfy \eqref{eq:deltagamma}, and in addition
\begin{equation}\label{eq:deltagamma2}
\delta<
\left(\frac{\rho_m(T)C_3}{6\rho_M(T)F_M(T)}\right)^{2/\alpha},
\gamma<\alpha(\delta-\delta^{1+\alpha/2}),
\text{and }
B>\max\left\{1, 2\delta \exp\left(\frac{6\rho_M(T)F_M(T)}{C_3\rho_m(T)}\right)\right\}.
\end{equation}
Then,
\begin{equation}\label{eq:II1}
|I_2|\leq\frac{1}{6}\rho_mD_B(\xi).
\end{equation}
\end{lemma}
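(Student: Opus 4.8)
The plan is to reduce $I_2$ to a pure drift term and then exploit that $u_2$ is globally Lipschitz with a quantitatively controlled constant. Assume without loss of generality that $x_1 > y_1$ and $\rho(x_1) > \rho(y_1)$, and set $\xi = x_1 - y_1$. At the first breakthrough time the map $(x,y) \mapsto \rho(x) - \rho(y) - \omega_B(x-y)$ attains its maximum value $0$ at the interior point $(x_1, y_1)$, so the first-order conditions give $\partial_x\rho(x_1) = \partial_x\rho(y_1) = \omega_B'(\xi)$. Since $I_2$ is the $u_2$-part of the drift term $I$ in \eqref{eq:rhoxi}, namely $I_2 = -\big(u_2(x_1)\partial_x\rho(x_1) - u_2(y_1)\partial_x\rho(y_1)\big)$, these identities collapse it to $I_2 = -\omega_B'(\xi)\big(u_2(x_1) - u_2(y_1)\big)$. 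From \eqref{eq:u1u2} we have $\partial_x u_2 = G = F\rho$, so the bounds \eqref{eq:Fbound2} and \eqref{eq:upperbound} yield $\|G\|_{L^\infty} \le F_M(T)\rho_M(T)$ and hence $|u_2(x_1) - u_2(y_1)| \le \xi\, F_M(T)\rho_M(T)$. Consequently
\[ |I_2| \le \xi\, F_M(T)\,\rho_M(T)\,\omega_B'(\xi), \]
and it remains to compare this with $\tfrac16\rho_m D_B(\xi)$ on the two ranges defining $\omega_B$ in \eqref{eq:omegaB}. Throughout I use that $\rho_m(t)$ is non-increasing while $\rho_M(t), F_M(t)$ are non-decreasing, so it suffices to prove the bound with all three quantities frozen at their time-$T$ values, which are exactly those appearing in \eqref{eq:deltagamma2}.

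On the near range $0 < \xi < B^{-1}\delta$ I would bound $\omega_B'(\xi) \le B$, giving $|I_2| \le F_M\rho_M B\xi$, and compare with $\tfrac16\rho_m D_B(\xi) = \tfrac16\rho_m C_3 B^{1+\alpha/2}\xi^{1-\alpha/2}$ from \eqref{eq:I3}. Dividing, the desired inequality reduces to $F_M\rho_M\,\xi^{\alpha/2} \le \tfrac16\rho_m C_3 B^{\alpha/2}$; using $\xi < B^{-1}\delta$ and $B \ge 1$ this follows once $\delta^{\alpha/2} \le \tfrac{\rho_m C_3}{6\rho_M F_M}$, which is the first condition in \eqref{eq:deltagamma2}.

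The far range $\xi \ge B^{-1}\delta$ is where the real difficulty lies. Here $\omega_B'(\xi) = \gamma/\xi$, so the prefactor $\xi$ cancels and $|I_2| \le \gamma F_M\rho_M$ is a constant, while $\tfrac16\rho_m D_B(\xi) = \tfrac16\rho_m C_3\, g(\xi)$ with $g(\xi) := \omega_B(\xi)\xi^{-\alpha}$. Since $\omega_B$ grows only logarithmically whereas $\xi^{-\alpha}$ decays, $g$ has no $B$-independent lower bound a priori; the saving feature is that on the torus $\xi$ is confined to the diameter, $\xi \le \tfrac12$. I would first show $g$ is decreasing on $[B^{-1}\delta, \tfrac12]$: a direct computation gives $g'(\xi) = \xi^{-\alpha-1}\big[\gamma - \alpha\,\omega_B(\xi)\big]$, and the assumption $\gamma < \alpha(\delta - \delta^{1+\alpha/2}) = \alpha\,\omega_B(B^{-1}\delta)$ makes the bracket negative at the left endpoint, hence throughout since $\omega_B$ is increasing. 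Thus $g(\xi) \ge g(\tfrac12) \ge \omega_B(\tfrac12) \ge \gamma\log\tfrac{B}{2\delta}$, and the lower bound $B > 2\delta\exp\!\big(\tfrac{6\rho_M F_M}{C_3\rho_m}\big)$ in \eqref{eq:deltagamma2} forces $\log\tfrac{B}{2\delta} > \tfrac{6\rho_M F_M}{C_3\rho_m}$. Combining, $\tfrac16\rho_m C_3 g(\xi) > \gamma F_M\rho_M \ge |I_2|$, establishing \eqref{eq:II1} on this range as well. The main obstacle, and the genuinely new point compared with the scaling-invariant setting of \cite{do2017global}, is precisely this far-field estimate: one must use the bounded geometry of $\T$ together with the logarithmic tail of $\omega_B$ and a large choice of $B$ to absorb the order-one drift produced by the $G$-part of the velocity.
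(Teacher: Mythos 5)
Your proof is correct and follows essentially the same route as the paper: the same starting bound $|I_2|\le\|\partial_xu_2\|_{L^\infty}\,\xi\,\omega_B'(\xi)\le\rho_M(T)F_M(T)\,\xi\,\omega_B'(\xi)$, the same case split at $\xi=B^{-1}\delta$ with the restriction $\xi\le 1/2$ from periodicity, and the same monotonicity argument showing $D_B$ decreases on $[B^{-1}\delta,1/2]$ so that $D_B(\xi)\ge D_B(1/2)\ge C_3\gamma\log\frac{B}{2\delta}$, absorbed by the largeness condition on $B$. The only cosmetic difference is that you spell out the first-order conditions $\partial_x\rho(x_1)=\partial_x\rho(y_1)=\omega_B'(\xi)$ at the breakthrough point, which the paper leaves implicit.
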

\begin{proof}
We start with the estimate
\[|I_2|\leq\|\pa_xu_2\|_{L^\infty}\xi\omega_B'(\xi)=\|G\|_{L^\infty}\xi\omega_B'(\xi)
\leq \rho_M(T)F_M(T) \xi\omega_B'(\xi).\]

For $\xi\in(0,B^{-1}\delta)$, $\omega_B'(\xi)<B$. So,
\begin{equation}\label{eq:II1a}
|I_2|\leq \rho_M(T)F_M(T)B\xi\leq\frac{1}{6}\rho_mD_B(\xi),
\end{equation}
provided that $\delta$ is small enough, satisfying
\eqref{eq:deltagamma2}, and $B>1$.

For $\xi\geq B^{-1}\delta$, since $\rho$ is periodic and $\omega_B$ is
increasing, the breakthrough can not happen first at $\xi>1/2$. So we
only need to consider $\xi\in(B^{-1}\delta, 1/2]$.
As $\omega_B'(\xi)=\frac{\gamma}{\xi}$ in this range, we get
\begin{equation}\label{eq:II1b}
|I_2|\leq \rho_M(T)F_M(T)\gamma.
\end{equation}
On the other hand, compute
\[\frac{d}{d\xi}D_B(\xi)=C_3\xi^{-\alpha-1}(-\alpha\omega_B(\xi)+\gamma)\leq C_3\xi^{-\alpha-1}(-\alpha(\delta-\delta^{1+\alpha/2})+\gamma)<0,
\]
for all $\xi\geq B^{-1}\delta$, provided that $\gamma$ is small
enough, satisfying \eqref{eq:deltagamma2}.
Therefore,
\begin{equation}\label{eq:minDBxi}
\min_{B^{-1}\delta\leq\xi\leq
    1/2}D_B(\xi)=D_B(1/2)\geq C_3\gamma\log\left(\frac{B}{2\delta}\right).
\end{equation}
Combining \eqref{eq:II1b}, \eqref{eq:minDBxi} and the assumption
on $B$ in \eqref{eq:deltagamma2}, we conclude
\begin{equation}\label{eq:II1b2}
|I_2|\leq
  \rho_M(T)F_M(T)\gamma\leq\frac{C_3}{6}\gamma\rho_m(T)\log\left(\frac{B}{2\delta}\right)
\leq\frac{1}{6}\rho_mD_B(\xi).
\end{equation}
\end{proof}

The estimates on $II_2$ and $III_2$ are more subtle.
To proceed, it is convinient to decompose $II_2+III_2$ in an
alternative way
\begin{equation}\label{eq:GtoF}
\begin{split}
II_2+III_2=&-\big(\rho(x_1)\pa_xu_2(x_1)-\rho(y_1)\pa_xu_2(y_1)\big)
=-\big(\rho(x_1)^2F(x_1)-\rho(y_1)^2F(y_1)\big)\\
=&-\big(\rho(x_1)^2-\rho(y_1)^2\big)F(x_1)-\rho(y_1)^2\big(F(x_1)-F(y_1)\big)
=IV+V.
\end{split}
\end{equation}

We first consider the case when $\xi< B^{-1}\delta$.
For $IV$, the estimate is similar to \eqref{eq:II1a}
\begin{equation}\label{eq:estIV}
|IV|=\omega_B(\xi)(\rho(x_1)+\rho(y_1))|F(x_1)|\leq
  2\rho_MF_MB\xi\leq\frac{1}{6}\rho_mD_B(\xi),
\end{equation}
where the last inequality holds if
$\delta$ is picked to be small enough, satisfying
\begin{equation}\label{eq:delta3}
\delta<\left(\frac{C_3\rho_m(T)}{12\rho_M(T)F_M(T)}\right)^{2/\alpha}.
\end{equation}

For $V$, we need the following lemma.
\begin{lemma}\label{lem:FxFy}
Let $\rho(\cdot,t)$ obey the modulus of continuity $\omega_B$  with
any $B>1$ as in
\eqref{eq:omegaB} for $0\leq t<t_1\leq T$.
Then, there exists a constant $C_F=C_F(T)$ such that
\begin{equation}\label{eq:FxFy}
|F(x,t)-F(y,t)|\leq C_F(T)B|x-y|,\quad
\forall~x,y\in\T,\quad \forall~ t\in[0,t_1].
\end{equation}
\end{lemma}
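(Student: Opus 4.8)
The plan is to run the representation formula \eqref{eq:F} for $F$ along characteristics, combined with a bound on the flow map that is \emph{uniform in} $B$. Fix $t\le t_1$ and two points $x,y\in\T$, and let $a,b$ be their preimages under the time-$t$ flow, i.e. $X(a,t)=x$ and $X(b,t)=y$, with $X$ the characteristic from \eqref{eq:path}. Subtracting the two instances of \eqref{eq:F} gives
\[
F(x,t)-F(y,t)=\big(F_0(a)-F_0(b)\big)+k\bar{\rho}\int_0^t\left(\frac{1}{\rho(X(a,s),s)}-\frac{1}{\rho(X(b,s),s)}\right)ds.
\]
The first term will be controlled by the Lipschitz bound $\|\pa_x F_0\|_{L^\infty}$ of the smooth initial datum $F_0=G_0/\rho_0$; the second is where the modulus enters, through $|\rho(X(a,s),s)-\rho(X(b,s),s)|\le\omega_B(|X(a,s)-X(b,s)|)\le B\,|X(a,s)-X(b,s)|$, the last step because $\omega_B$ is concave with $\omega_B(0)=0$ and $\omega_B'(0^+)=B$. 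Both terms thus reduce to comparing the separation of characteristics $|X(a,s)-X(b,s)|$ with $|x-y|$.

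The crucial ingredient, and the step that keeps the constant free of $B$, is the Jacobian identity for the flow. Differentiating \eqref{eq:path} in $x$ gives $\pa_t\pa_x X=(\pa_x u)(X,t)\,\pa_x X$, while the continuity equation \eqref{eq:EPASrho} yields $\frac{d}{dt}\big[\rho(X,t)\,\pa_x X\big]=0$; together these give
\[
\pa_x X(x,t)=\frac{\rho_0(x)}{\rho(X(x,t),t)}.
\]
By the density bounds of Theorems \ref{thm:lower} and \ref{thm:upper}, this Jacobian lies in a fixed interval $[c_1,c_2]$ with $c_1=\rho_m(0)/\rho_M(T)$ and $c_2=\|\rho_0\|_{L^\infty}/\rho_m(T)$, both independent of $B$. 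Hence the time-$s$ flow is bi-Lipschitz with these constants, so $|a-b|\le c_1^{-1}|x-y|$ and $|X(a,s)-X(b,s)|\le c_2|a-b|\le (c_2/c_1)|x-y|$ for all $s\le t$.

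With this in hand the two terms close immediately. The first obeys $|F_0(a)-F_0(b)|\le\|\pa_x F_0\|_{L^\infty}\,c_1^{-1}|x-y|$, and for the second, bounding the denominators below by $\rho_m(T)^2$,
\[
|k|\bar{\rho}\int_0^t\left|\frac{1}{\rho(X(a,s),s)}-\frac{1}{\rho(X(b,s),s)}\right|ds\le\frac{|k|\bar{\rho}\,B}{\rho_m(T)^2}\int_0^t|X(a,s)-X(b,s)|\,ds\le\frac{|k|\bar{\rho}\,c_2\,T}{c_1\,\rho_m(T)^2}\,B\,|x-y|.
\]
Summing the two and using $B>1$ to absorb the $B$-free first term yields \eqref{eq:FxFy} with a constant $C_F(T)$ depending only on $T$ and the initial data. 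The main obstacle to watch for is precisely the temptation to bound $|X(a,s)-X(b,s)|$ by a Gronwall argument through $\|\pa_x u\|_{L^\infty}$: since $\pa_x u=\Lambda^\alpha\rho+G$ and obeying $\omega_B$ only controls $\|\Lambda^\alpha\rho\|_{L^\infty}\lesssim B^\alpha$, that route would produce an unacceptable factor $e^{CB^\alpha T}$ and destroy the linear dependence on $B$. The Jacobian identity sidesteps this entirely by measuring the spreading of characteristics through the density ratio, which the a priori bounds pin down uniformly in $B$.
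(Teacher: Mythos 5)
Your argument is correct, but it takes a genuinely different route from the paper's. The paper differentiates the transport equation \eqref{eq:Fcha} for $F$, notes that $q=\pa_xF/\rho$ then satisfies the \emph{pure} transport equation \eqref{eq:q} with source $-k\bar\rho\,\pa_x\rho/\rho^3$, and integrates along characteristics using $\|\pa_x\rho\|_{L^\infty}\le\omega_B'(0)=B$ to get $\|q\|_{L^\infty}\le C'(T)B$, whence $\|\pa_xF\|_{L^\infty}=\|q\rho\|_{L^\infty}\le\rho_M(T)C'(T)B$. You instead work with the integrated representation \eqref{eq:F} and compare two characteristics, which obliges you to control the spreading of the flow map \emph{uniformly in} $B$; your key device --- the mass-conservation identity $\pa_xX(x,t)=\rho_0(x)/\rho(X(x,t),t)$, which pins the Jacobian in $[\rho_m(0)/\rho_M(T),\,\|\rho_0\|_{L^\infty}/\rho_m(T)]$ independently of $B$ --- is exactly the right way to do this, and you correctly identify that a Gronwall bound through $\|\pa_xu\|_{L^\infty}$ would wreck the linear dependence on $B$. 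Both proofs use the modulus only through $|\rho(x,s)-\rho(y,s)|\le\omega_B(|x-y|)\le B|x-y|$ and land on constants of the same order (roughly $|k|\bar\rho\,T/\rho_m(T)^3$ times initial-data factors). What each buys: the paper's version is shorter, since passing from the divergence-form equation for $f=\pa_xF$ to pure transport for $f/\rho$ eliminates any need for a flow-map estimate; yours avoids differentiating $F$ and makes the bi-Lipschitz structure of the flow explicit, which is a reusable fact, though the regularity demanded of the data ($F_0$ Lipschitz) ends up the same. One point to state explicitly in your write-up: the hypothesis gives the modulus only for $t<t_1$ while the conclusion is on $[0,t_1]$, so at $s=t_1$ you should invoke continuity of $\rho$ to pass to the non-strict inequality $|\rho(x,t_1)-\rho(y,t_1)|\le\omega_B(|x-y|)$; the paper's proof relies on the same continuity step implicitly.
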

\begin{proof}
Recall the dynamics of $F$
\begin{equation}\label{eq:F2}
\pa_tF+u\pa_xF=-k\left(1-\frac{\bar{\rho}}{\rho}\right).
\end{equation}
Let $f=\pa_xF$. Differentiate \eqref{eq:F2} with respect to $x$ and
get
\begin{equation}\label{eq:Fx}
\partial_tf+\pa_x(uf)=-k\bar{\rho}\frac{\pa_x\rho}{\rho^2}.
\end{equation}
Let $q=f/\rho$. Using \eqref{eq:EPASrho} and \eqref{eq:Fx}, we obtain
\begin{equation}\label{eq:q}
\partial_tq+u\pa_xq=-k\bar{\rho}\frac{\pa_x\rho}{\rho^3}.
\end{equation}
It follows that
\[q(X(x,t),t)=q_0(x)-k\bar{\rho}\int_0^t
\frac{\pa_x\rho(X(x,s),s)}{\rho(X(x,s),s)^3}ds,\]
where $X$ is the trajectory of the characteristic path defined in \eqref{eq:path}.
Then, since for $t\leq t_1$, $\rho(\cdot,t)$ obeys $\omega_B$, we
obtain the following estimate
\begin{equation}\label{eq:qmax}
\|q(\cdot,t)\|_{L^\infty}\leq\|q_0\|_{L^\infty}+|k|\bar{\rho}\int_0^t\frac{B}{\rho_m(s)^3}ds
\leq C'(T)B,
\end{equation}
where the finite constant $C'$ depends on $T$ and initial
data. This implies
\[
|F(x)-F(y)|\leq\|f\|_{L^\infty}|x-y|\leq\rho_M(T) C'(T)B\xi=:C_F(T)B|x-y|.
\]
\end{proof}

Applying the estimate \eqref{eq:FxFy} at the breakthrough points and using
the upper bound on $\rho$ \eqref{eq:upperbound}, we get
\begin{equation}\label{eq:estV}
|V|\leq \rho_M(T)^2C_F(T)B\xi< \frac{1}{6}\rho_mD_B(\xi),
\end{equation}
where the second inequality holds by picking sufficiently small
$\delta$, satisifying
\begin{equation}\label{eq:delta4}
\delta<\left(\frac{C_3\rho_m(T)}{6\rho_M(T)^2C_F(T)}\right)^{2/\alpha},
\end{equation}
similar to the estimate in \eqref{eq:II1a}.

Combining \eqref{eq:est1}, \eqref{eq:II1a}, \eqref{eq:estIV} and
\eqref{eq:estV}, we conclude that
\[\pa_t(\rho(x_1)-\rho(x_2))<0, \quad
\forall~\xi=|x_1-x_2|<B^{-1}\delta.\]

Finally, we estimate $II_2+III_2$ for $\xi\in[B^{-1}\delta,1/2]$.
As $\rho$ and $F$ are bounded, it is clear that
\begin{equation}\label{eq:estIVV}
|II_2+III_2|\leq 2\rho_M(T)^2 F_M(T)<\frac{1}{3}\rho_mD_B(\xi).
\end{equation}
The second inequality holds by picking $B$ large enough. This is due
to the fact that $D_B(\xi)$ is an increasing in $B$ with
$\lim_{B\to\infty}D_B(\xi)=\infty$. More precisely,
using the bound \eqref{eq:minDBxi}, it suffices to pick
\begin{equation}\label{eq:B2}
B>2\delta\exp\left(\frac{6\rho_M(T)^2 F_M(T)}{C_3\gamma\rho_m(T)}\right).
\end{equation}

Combining \eqref{eq:est1}, \eqref{eq:II1b2} and
\eqref{eq:estIVV}, we conclude that
\[\pa_t(\rho(x_1)-\rho(x_2))<0, \quad
\forall~\xi=|x_1-x_2|\in[B^{-1}\delta,1/2].\]

Let us summerize the procedure on the construction of the modulus of
continuity $\omega_B$. First, we fix a time $T$.
Then, we pick a small parameter $\delta$
satisfying \eqref{eq:deltagammaBinit}, \eqref{eq:deltagamma},
\eqref{eq:delta}, \eqref{eq:deltagamma2}, \eqref{eq:delta3} and \eqref{eq:delta4}:
\begin{equation}\label{eq:deltaall}
\delta<\min\left\{1,
\frac{2\|\rho_0\|_{L^\infty}}{\|\pa_x\rho_0\|_{L^\infty}},
\left(\frac{C_3\rho_m(T)}{\max\{4C_2,
    12\rho_M(T)F_M(T),  6\rho_M(T)^2C_F(T)\}}\right)^{2/\alpha}\right\}.
\end{equation}
Next, we pick a small parameter $\gamma$ satisfying
\eqref{eq:deltagamma}, \eqref{eq:gamma} and \eqref{eq:deltagamma2}:
\begin{equation}\label{eq:gammaall}
\gamma<\min\left\{
\frac{C_3}{4C_2}\rho_m(T),~\min\left(\frac{1}{2\log2},\alpha\right)\cdot
(\delta-\delta^{1+\alpha/2})
\right\}.
\end{equation}
Finally, we pick a large parameter $B$ satisfying
\eqref{eq:deltagammaBinit}, \eqref{eq:deltagamma2} and \eqref{eq:B2}:
\begin{equation}\label{eq:Ball}
B>\max\left\{1,
  \frac{\delta\|\pa_x\rho_0\|_{L^\infty}}{2\|\rho_0\|_{L^\infty}}\exp\left(\frac{2\|\rho_0\|_{L^\infty}}{\gamma}\right),
  2\delta\exp\left(\frac{6\rho_M(T)^2 F_M(T)}{C_3\gamma\rho_m(T)}\right)\right\}.
\end{equation}
Here we assume, without loss of generality, that $\gamma \leq 1$ and $\rho_M(T) \geq 1$ to simplify
the expression.

With this choice of $\omega_B$, we have shown that $\rho(\cdot, t)$
obeys $\omega_B$  for all $t\in[0,T]$. Hence,
\[\|\pa_x\rho(\cdot, t)\|_{L^\infty}\leq B,\quad\forall~t\in[0,T].\]
Therefore, condition \eqref{eq:BKM} is satisfied, and we obtain global
regularity of the system.

We end this section by the following remark.
\begin{remark}
When $k=0$, all the quantities $\rho_m, \rho_M, F_M$ and $C_F$ do not
depend on $T$. As a consequence, $\delta, \gamma$
and $B$ do not depend on $T$ as well. Therefore,
$\|\pa_x\rho(\cdot, t)\|_{L^\infty}\leq B$ for any $t\geq0$. This
estimate improves the result obtained in \cite{do2017global},
where the bound on $\pa_x\rho$ could grow in time. We note that
stationary in time bound on $\pa_x\rho$ for the Euler-Alignment model
(without Possion forcing) has been derived in \cite{shvydkoy2017eulerian3} by a different
argument.

For $k\neq 0$, with the singular attractive or repulsive force, our
estimate on $\rho_m$ and $\rho_M$ is not uniform in time. We are able
to obtain time-dependent bounds \eqref{eq:lowerbound} and
\eqref{eq:upperbound}, where $\rho_m$ can decay exponentially in time,
and $\rho_M$ (and $F_M, C_F$) can grow exponentially in time. From
\eqref{eq:deltaall} and \eqref{eq:gammaall}, we see that $\delta$ and
$\gamma$ decay exponentially in time. Finally, from
\eqref{eq:Ball}, $B$ grows double exponentially in time. Therefore,
we obtain a double exponential in time bound on
$\|\pa_x\rho(\cdot,t)\|_{L^\infty}$. It is not clear whether such
bound is optimal. We will leave it for future investigation.
\end{remark}

\section{Euler dynamics with general three-zone interactions}
In this section, we extend our global regularity result for EPA system
to more general Euler dynamics with three-zone interactions. Recall
the \emph{Euler-3Zone system} under periodic setup
\begin{align}
&\pa_t \rho + \pa_x (\rho u) = 0, \quad x\in\T,\,\,t>0,\label{eq:grho} \\
&\,\,\,\, \pa_t u + u\pa_x u =  \int_{\T} \psi(y)(u(x+y,t) - u(x,t))\rho(y,t)dy
- \pa_x K \star \rho.\label{eq:gu}
\end{align}
We will discuss the global wellposedness of the system with more general
singular influence function $\psi$ and interaction potential $K$.

\subsection{General singular influence function}
Consider a general influence function $\psi$ which is positive
\begin{equation}\label{eq:psimg}
\psi_m:=\min_{x\in\T}\psi(x)>0,
\end{equation}
and singular at origin.
Recall the decomposition \eqref{eq:psidecomp}:
we will consider the class of functions where one
can decompose $\psi$ into two parts
\begin{equation}\label{eq:psidecomp2}
\psi=c\psi_\alpha+\psi_L.
\end{equation}
Here $\psi_\alpha$ is the singular power defined in \eqref{eq:psip}, and $\psi_L$ is
bounded and Lipschitz.
In this case, let
\begin{equation}\label{eq:gg}
G=\pa_xu-c\Lambda^\alpha\rho+\psi_L\star\rho.
\end{equation}
Then, the dynamics of $G$ reads:
\begin{align*}
\pa_tG=&~\pa_t\pa_xu-c\pa_t\Lambda^\alpha\rho+\psi\star\pa_t\rho\\
=&-\pa_x(u\pa_xu)+c\pa_x\big(-\Lambda^\alpha(\rho
   u)+u\Lambda^\alpha\rho\big)-\pa_x\big(\psi_L\star(\rho
   u)-u(\psi_L\star\rho)\big)-\pa_{xx}^2K\star\rho\\
&+c\Lambda^\alpha\pa_x(\rho u)-\psi_L\star\pa_x(\rho u)\\
=&-\pa_x\big(u(\pa_xu-c\Lambda^\alpha\rho+\psi_L\star\rho)\big)-\pa_{xx}^2K\star\rho=-\pa_x(Gu)-\pa_{xx}^2K\star\rho.
\end{align*}
Therefore, $(\rho,G)$ still satisfy \eqref{eq:EPASrho2} and
\eqref{eq:EPASG},
\begin{equation}\label{eq:rhoandg}
\pa_t\rho+\pa_x(\rho u) = 0,\quad
\pa_tG+\pa_x(G u) = -\pa_{xx}^2K\star\rho,
\end{equation}
with a different relation
\begin{equation}\label{eq:uxg}
\pa_xu=\Lambda^\alpha\rho+G-\psi_L\star\rho.
\end{equation}
Then the velocity field $u$ can be recovered as
\begin{equation}\label{eq:urecg}
u(x,t)=\Lambda^\alpha\partial_x^{-1}(\rho(x,t)-\bar{\rho})+
\partial_x^{-1}\big(G(x,t)-\psi_L\star\rho(x,t)\big)+I_0(t),
\end{equation}
where $I_0(t)$ can be determined by conservation of momentum
\eqref{eq:mom}. The second term on the right hand side is well-defined since
\[\int_\T \big(G(x,t)-\psi_L\star\rho(x,t)\big)dx=
\int_\T\big(\pa_xu(x,t)-\Lambda^\alpha\rho(x,t)\big)dx=
0,\quad\forall~t\geq0.\]
We can decompose $u$ into two parts  $u=u_S+u_L$, where $u_S$ is the
singular part
\begin{equation}\label{eq:using}
\begin{split}
u_S(x,t)=&\Lambda^\alpha\partial_x^{-1}(\rho(x,t)-\bar{\rho})+
\partial_x^{-1}\left(G(x,t)-\int_\T G(x,0)dx\right),\\
\pa_xu_S=&\Lambda^\alpha\rho+G-\int_\T G(x,0)dx,
\end{split}
\end{equation}
and $u_L$ is the Lipschitz part
\begin{equation}\label{eq:uLip}
\begin{split}
u_L(x,t)=&-
\partial_x^{-1}\left(\psi_L\star\rho(x,t)-\int_\T
  G(x,0)dx\right)+I_0(t),\\
\pa_xu_L=&-\psi_L\star\rho+\int_\T G(x,0)dx.
\end{split}
\end{equation}

Now, we follow the same procedure as fractional EPA system to show
global regularity of system \eqref{eq:rhoandg}, \eqref{eq:urecg}. We
first take the Newtonian potential \eqref{eq:Newtonian}. General interaction
potentials will be discussed in the next section. The arguments below follow
the same outline, so we focus on indicating changes.

\subsubsection*{Step 1: Apriori lower bound on $\rho$}
The statement and proof are identical to Theorem \ref{thm:lower},
except that estimate \eqref{eq:lb1} is replaced by
\begin{equation}\label{eq:lbg}
\begin{aligned}
-c\Lambda^\alpha\rho(\underline{x},t)+&\psi_L\star\rho(\underline{x},t)\\
=&~\int_\T\big(c\psi_\alpha(y)+\psi_L(y)\big)\big(\rho(\underline{x}-y,t)-\rho(\underline{x},t)\big)dy+\rho(\underline{x},t)\int_\T\psi_L(y)dy\\
\geq&~
\psi_m\int_\T\big(\rho(\underline{x}-y,t)-\rho(\underline{x},t)\big)dy
-\rho(\underline{x},t)\|\psi_L\|_{L^\infty}\\
=&~\psi_m\bar{\rho}-(\psi_m+\|\psi_L\|_{L^\infty})\rho(\underline{x},t).
\end{aligned}
\end{equation}
Hence, estimate \eqref{eq:rhombound1} becomes
\begin{equation}\label{eq:rhomboundg}
\pa_t\rho(\underline{x},t)\geq~
\big(\psi_m\bar{\rho}\big)\rho(\underline{x},t)-\left[\psi_m+\|\psi_L\|_{L^\infty}+
\|F_0\|_{L^\infty}+|k|t+|k|\bar{\rho}\int_0^t\frac{1}{\rho_m(s)}ds\right]\rho(\underline{x},t)^2,
\end{equation}
where the only extra term
$\|\psi_L\|_{L^\infty}\rho(\underline{x},t)^2$ is quadratic in
$\rho$, and can be
controlled by the linear term
$\big(\psi_m\bar{\rho}\big)\rho(\underline{x},t)$ if $\rho_m$ is small
enough.

Following the same proof, we obtain the lower bound \eqref{eq:lowerbound}
with coefficient $A_m, C_m$ satisfying \eqref{eq:AC} for any
$\epsilon\in(0,\epsilon_*)$, where $\epsilon_*=\frac{1}{2}\left(\sqrt{1+\frac{4\psi_m}{e(\psi_m+\|\psi_L\|_{L^\infty}+\|F_0\|_{L^\infty})}}-1\right)$.

\subsubsection*{Step 2: Apriori upper bound on $\rho$}
We follow the proof of Theorem \ref{thm:upper}. The estimate
\eqref{eq:upperbound1} becomes
\begin{align*}
\frac{d}{dt}\rho(\bar{x},t)\leq& -C_1\rho(\bar{x},t)^{2+\alpha}+
F_M(t)\rho(\bar{x},t)^2+\rho(\bar{x},t)\cdot\psi_L\star\rho(\bar{x},t)\\
\leq& -C_1\rho(\bar{x},t)^{2+\alpha}+
F_M(t)\rho(\bar{x},t)^2+\|\psi_L\|_{L^\infty}\bar{\rho}\rho(\bar{x},t).
\end{align*}
Both second and third terms are dominated by the first term if
$\rho(\bar{x},t)$ is big enough. In particular $\pa_t\rho(\bar{x},t)<0$ if
$\rho(\bar{x},t)>\max\{(2F_M/C_1)^{1/\alpha},
(2\|\psi_L\|_{L^\infty}\bar{\rho}/C_1)^{1/(1+\alpha)}\}$
. Therefore,
\begin{equation}\label{eq:rhomaxbg}
\rho(x,t)\leq\rho(\bar{x},t)\leq\max\left\{\|\rho_0\|_{L^\infty},
  3\bar{\rho}, \left(\frac{2F_M(t)}{C_1}\right)^{1/\alpha},
\left(\frac{2\|\psi_L\|_{L^\infty}\bar{\rho}}{C_1}\right)^{1/(1+\alpha)}\right\},
\end{equation}
and \eqref{eq:upperbound} holds with $A_M=A_m/\alpha$ and
\[C_M=\max\left\{\max_{x\in\T}\rho_0(x), ~ 3\bar{\rho}, ~
\left[\frac{2}{C_1}\left(\|F_0\|_{L^\infty}+\frac{|k|}{eA_m}+\frac{|k|\bar{\rho}}{A_mC_m}\right)\right]^{\frac{1}{\alpha}},
\left(\frac{2\|\psi_L\|_{L^\infty}\bar{\rho}}{C_1}\right)^{\frac{1}{1+\alpha}}\right\}.\]

\subsubsection*{Step 3: Local wellposedness}
We write the system \eqref{eq:rhoandg} \eqref{eq:urecg} in terms of
$\theta=\rho-\bar{\rho}$ and $G$ as follows
\begin{align}
\pa_t\theta+\pa_x(\theta u_S)+\pa_x(\theta u_L)&=-\bar{\rho}\pa_xu_S-\bar{\rho}\pa_xu_L,\label{eq:thetag}\\
\pa_tG+\pa_x(Gu_S)+ \pa_x(G u_L)&=-k\theta,\label{eq:localgg}
\end{align}
where $u_S$ and $u_L$ are defined in \eqref{eq:using} and
\eqref{eq:uLip} respectively.

We proceed with a Gronwall estimate on the quantity $Y$ in
\eqref{eq:Y}.
The estimates in Theorem \ref{thm:local} can be applied directly to
the $u_S$ part. We will focus on the Lipschitz part $u_L$. The
procedure is similar to \cite[Theorem Appendix A.1]{carrillo2016critical}. We
will summarize in below.

For the term $\pa_x(\theta u_L)$, we have
\[
\int_\T\Lambda^s\theta\cdot\Lambda^s\pa_x(\theta u_L)dx
=\int_\T\Lambda^s\theta\cdot\Lambda^s\pa_x\theta\cdot u_Ldx
+\int_\T\Lambda^s\theta\cdot[\Lambda^s\pa_x, u_L]\theta~dx
=:L_1+L_2.\]
We estimate the two terms one by one. For $L_1$,
\begin{equation}\label{eq:l1est}
|L_1|=\left|\int_\T\pa_x\left(\frac{(\Lambda^s\theta)^2}{2}\right)
u_Ldx\right|\leq
\frac{1}{2}\int_\T(\Lambda^s\theta)^2|\pa_xu_L| dx
\leq\frac{1}{2}\|\psi_L\|_{L^\infty}\bar{\rho}\|\theta\|_{H^s}^2.
\end{equation}
For $L_2$, we apply commutator estimate (e.g. \cite[Lemma Appendix
A.1]{carrillo2016critical}) and get
\begin{align}
|L_2|\leq&~\|\theta\|_{H^s}\big\|[\Lambda^s\pa_x,
u_L]\theta\big\|_{L^2}
\lesssim\|\theta\|_{H^s}\left(\|\pa_xu_L\|_{L^\infty}\|\theta\|_{H^s}+
\|\pa_xu_L\|_{H^{s}}\|\theta\|_{L^\infty}\right)\nonumber\\
\leq&~\|\theta\|_{H^s}\left(\|\psi_L\|_{L^\infty}\bar{\rho}\|\theta\|_{H^s}+
\|\psi_L\|_{L^\infty}\|\rho\|_{H^{s}}\|\theta\|_{L^\infty}\right)\nonumber\\
\leq&~\|\psi_L\|_{L^\infty}(2\bar{\rho}+\|\theta\|_{L^\infty})\|\theta\|_{H^s}^2
+\frac{1}{4}\|\psi_L\|_{L^\infty}\bar{\rho}.\label{eq:l2est}
\end{align}
Note that for the last inequality, we have used
$\|\rho\|_{H^s}\leq \|\theta\|_{H^s}+\|\bar{\rho}\|_{H^s}=
 \|\theta\|_{H^s}+\bar{\rho}$.

For the term $-\bar{\rho}\pa_xu_L$,
\begin{equation}\label{eq:l3est}
\left|-\bar{\rho}\int_\T\Lambda^s\theta\cdot\Lambda^s\pa_x u_Ldx\right|
\leq
\bar{\rho}\|\theta\|_{H^s}\|(\pa_x\psi_L)\star(\Lambda^s\rho)\|_{L^2}
\leq
\bar{\rho}\|\pa_x\psi_L\|_{L^\infty}\|\theta\|_{H^s}(\|\theta\|_{H^s}
+\bar{\rho}).
\end{equation}

For the term $\pa_x(Gu_L)$, the estimate is similar to the term
$\pa_x(\theta u_L)$.
\begin{align*}
\int_\T\Lambda^{s-\frac{\alpha}{2}}G&\cdot\Lambda^{s-\frac{\alpha}{2}}
\pa_x (Gu_L)dx\\
=&\int_\T\Lambda^{s-\frac{\alpha}{2}}G\cdot\Lambda^{s-\frac{\alpha}{2}}
\pa_x G\cdot u_Ldx
+\int_\T\Lambda^{s-\frac{\alpha}{2}}G\cdot
[\Lambda^{s-\frac{\alpha}{2}}\pa_x,u_L]G~dx
=:L_4+L_5.
\end{align*}
where
\begin{equation}\label{eq:l4est}
|L_4|=\left|\int_\T\pa_x\left(\frac{(\Lambda^{s-\frac{\alpha}{2}}G)^2}{2}\right)
u_Ldx\right|\leq
\frac{1}{2}\int_\T(\Lambda^{s-\frac{\alpha}{2}}G)^2|\pa_xu_L| dx
\leq\frac{1}{2}\|\psi_L\|_{L^\infty}\bar{\rho}\|G\|_{H^{s-\frac{\alpha}{2}}}^2,
\end{equation}
and
\begin{align}
|L_5|\leq&~\|G\|_{H^{s-\frac{\alpha}{2}}}\big\|[\Lambda^{s-\frac{\alpha}{2}}\pa_x,
u_L]G\big\|_{L^2}
\lesssim\|G\|_{H^{s-\frac{\alpha}{2}}}\left(\|\pa_xu_L\|_{L^\infty}
\|G\|_{H^{s-\frac{\alpha}{2}}}+
\|\pa_xu_L\|_{H^{s}}\|G\|_{L^\infty}\right)\nonumber\\
\leq&~\|\psi_L\|_{L^\infty}\bar{\rho}\|G\|_{H^{s-\frac{\alpha}{2}}}^2
+\|\psi_L\|_{L^\infty}\bar{\rho}(\|\theta\|_{H^s}+\bar{\rho})
\|G\|_{H^{s-\frac{\alpha}{2}}}\nonumber\\
\leq&~\|\psi_L\|_{L^\infty}\bar{\rho}\left[2\|G\|_{H^{s-\frac{\alpha}{2}}}^2
+\frac12\|\theta\|_{H^{s-\frac{\alpha}{2}}}^2+\frac{\bar{\rho}^2}{2}
\right].\label{eq:l5est}
\end{align}

Combining \eqref{eq:YHs}, \eqref{eq:l1est}, \eqref{eq:l2est}, \eqref{eq:l3est},
\eqref{eq:l4est}, \eqref{eq:l5est} and the fact that
$\|G(\cdot,t)\|_{L^\infty}$ is controlled from above by a finite (growing in time) bound, we obtain that for all $t\in[0,T]$,
\begin{equation}\label{eq:YHsg}
\frac{d}{dt}Y(t)\leq C(T)(1+\|\pa_x\theta(\cdot,t)\|_{L^\infty}^2)Y(t)-\frac{\rho_m(t)}{6}\|\theta\|_{H^{s+\frac{\alpha}{2}}}^2,
\end{equation}
where the constant $C$ depends on initial data and $T$.
The same Gronwall's inequality yields local wellposedness as well as
BKM-type blowup condition \eqref{eq:BKM}.

\subsubsection*{Step 4: Global wellposedness}
To check the condition \eqref{eq:BKM}, we will use the procedure identical
to that in section \ref{sec:global}. Let us decompose $u$ as in
\eqref{eq:urecg}, $u = u_1+u_2$ where
\begin{equation}\label{eq:u1u2g}
u_1(x,t)=\Lambda^\alpha\partial_x^{-1}(\rho(x,t)-\bar{\rho}),\quad
u_2(x,t)=\partial_x^{-1}\big(G(x,t)-\psi_L\star\rho(x,t)\big)+I_0(t).
\end{equation}
The only difference between our system \eqref{eq:rhoandg}
\eqref{eq:urecg} and the EPA system is that there is an extra term in
$u_2$.
Throughout the proof in section \ref{sec:global}, the only property of
$u_2$ we have used is that $\pa_xu_2$ is bounded, namely
\[\|\pa_xu_2(\cdot,t)\|_{L^\infty}\leq \rho_M(T)F_M(T)<\infty,\quad\forall~t\in[0,T].\]
For our $u_2$ defined in \eqref{eq:u1u2g}, we also have a bound on
$\pa_xu_2$:
\[\|\pa_xu_2(\cdot,t)\|_{L^\infty}=\|G(\cdot,t)-\psi_L\star\rho(\cdot,t)\|_{L^\infty}\leq
  \rho_M(T)F_M(T)+\|\psi_L\|_{L^\infty}\bar\rho<\infty,\quad\forall~t\in[0,T].\]
Hence, global regularity follows from the same procedure by
controlling the modulus of continuity.

\subsection{General interaction potential}
In this part, we consider system \eqref{eq:grho}-\eqref{eq:gu} with a
general interaction potential $K\in W^{2,\infty}(\T)$. This class of
potentials is more regular than the Newtonian potential $\mathcal{N}$
defined in \eqref{eq:Newtonian}, as
$\pa^2_{xx}\mathcal{N}=k(\delta_0-1)\not\in L^\infty$, where
$\delta_0$ is the Dirac delta at $x=0$. We will show global
wellposedness of Euler-3Zone system with $W^{2,\infty}$
potentials. The result automatically extends to systems with
potentials that can be decomposed into a sum of a Newtonian potential
and a $W^{2,\infty}$ potential.

Now, let us assume $K\in W^{2,\infty}(\T)$. After the transformation,
the dynamics for $(\rho,G)$ becomes \eqref{eq:rhoandg}, with velocity
field $u$ defined as \eqref{eq:urecg}. We shall run through the same
procedure and point out the differences.

\subsubsection*{Step 1: Apriori lower bound on $\rho$}
Due to the change of the potential, the dynamics of $F$
\eqref{eq:Fcha} becomes
\begin{equation}\label{eq:Fchag}
(\pa_t+u\pa_x)F=-\frac{\pa_{xx}^2K\star\rho}{\rho}.
\end{equation}
Therefore, we get
\begin{equation}\label{eq:Fg}
F(X(x,t),t)=F_0(x)-\int_0^t\frac{\pa_{xx}^2K\star\rho(X(x,s),s)}{\rho(X(x,s),s)}ds,
\end{equation}
where $X(x,t)$ is the characteristic path defined in \eqref{eq:path}.
Then, we obtain a bound
\begin{equation}\label{eq:Fboundg}
\|F(\cdot,t)\|_{L^\infty}\leq~\|F_0\|_{L^\infty}+\|\pa_{xx}K\|_{L^\infty}\bar{\rho}
\int_0^t\frac{1}{\rho_m(s)}ds,
\end{equation}
which is similar as \eqref{eq:Fbound}. In fact, it is a simpler
bound as the right hand side does not contain a linear term on $t$.

The lower bound \eqref{eq:lowerbound} follows then by the same
argument, with
\[A_m=\frac{\|\pa_{xx}^2K\|_{L^\infty}}{\psi_m}, \quad
C_m=\min\left\{\rho_m(0),\frac{\psi_m\bar{\rho}}{\psi_m+\|\psi_L\|_{L^\infty}+\|F_0\|_{L^\infty}}\right\}.\]

\subsubsection*{Step 2: Apriori upper bound on $\rho$}
The upper bound estimate \eqref{eq:rhomaxbg} can be obtained without any
additional difficulties. Since we have
\[F_M(t)=\|F_0\|_{L^\infty}+\frac{\|\pa_{xx}^2K\|_{L^\infty}\bar{\rho}}{A_mC_m}e^{A_mt}\]
by \eqref{eq:Fboundg} and the lower bound estimate on $\rho$,
the upper bound \eqref{eq:upperbound} holds with
$A_M=A_m/\alpha$ and
\[C_M=\max\left\{\max_{x\in\T}\rho_0(x), ~ 3\bar{\rho}, ~
\left[\frac{2}{C_1}\left(\|F_0\|_{L^\infty}+\frac{\|\pa_{xx}^2K\|_{L^\infty}\bar{\rho}}{A_mC_m}\right)\right]^{\frac{1}{\alpha}},
\left(\frac{2\|\psi_L\|_{L^\infty}\bar{\rho}}{C_1}\right)^{\frac{1}{1+\alpha}}\right\}.\]

\subsubsection*{Step 3: Local wellposedness}
Since the potential only enters the dynamics of $G$ equation, so the system in
terms of $(\theta, G)$ is identical to
\eqref{eq:thetag}-\eqref{eq:localgg}, except the right hand side of
\eqref{eq:localgg}
is replaced by $-\pa_{xx}^2K\star\rho$. Hence, we only need to
estimate this extra term.
\begin{align*}
\left|\int_\T\Lambda^{s-\frac{\alpha}{2}}G\right.&\left.\cdot~\Lambda^{s-\frac{\alpha}{2}}(\pa_{xx}^2K\star\rho)~dx\right|
=\left|\int_\T\Lambda^{s-\frac{\alpha}{2}}G\cdot(\pa_{xx}^2K\star\Lambda^{s-\frac{\alpha}{2}}\rho)~dx\right|\\
\lesssim&~\|\pa_{xx}^2K\|_{L^\infty}\|G\|_{H^{s-\frac{\alpha}{2}}}\|\theta\|_{H^s}
\leq \frac{1}{2}\|\pa_{xx}^2K\|_{L^\infty}Y(t).
\end{align*}
The local wellposedness and BKM-type blowup condition \eqref{eq:BKM}
follow by applying the same Gronwall's inequality on $Y$.

\subsubsection*{Step 4: Global wellposedness}
The argument for EPA system can be directly applied to the general system as
the $\rho$ equations in both cases are the same. The different
potential does change the estimates on $\rho_m, \rho_M, F_M, C_F$,
which are needed to construct the modulus $\omega_B$. Since $\rho_m,
\rho_M$ and $F_M$ have been treated in the previous steps, we are left
with estimating $C_F$, namely proving Lemma \ref{lem:FxFy} for the
general system.
\begin{proof}[Proof of Lemma \ref{lem:FxFy}]
Let $f=\pa_xF$. Differentiate \eqref{eq:Fchag} with respect to $x$ and
get
\begin{equation}\label{eq:Fxg}
\partial_tf+\pa_x(uf)=\frac{-(\pa_{xxx}^3K\star\rho)\rho+(\pa_{xx}^2K\star\rho)\pa_x\rho}{\rho^2}.
\end{equation}
Let $q=f/\rho$. Using \eqref{eq:EPASrho} and \eqref{eq:Fxg}, we obtain
\begin{equation}\label{eq:qg}
\partial_tq+u\pa_xq=\frac{-(\pa_{xxx}^3K\star\rho)\rho+(\pa_{xx}^2K\star\rho)\pa_x\rho}{\rho^3}.
\end{equation}
For $t\leq t_1$, $\rho(\cdot,t)$ obeys $\omega_B$. Then
$\|\pa_x\rho(\cdot,t)\|_{L^\infty}\leq\omega_B'(0)=B$. Therefore, we
can bound the right hand side of \eqref{eq:qg} as follows:
\begin{align*}
\left|\frac{-(\pa_{xxx}^3K\star\rho)\rho+(\pa_{xx}^2K\star\rho)\pa_x\rho}{\rho^3}\right|
\leq&~\frac{\|\pa_{xx}^2K\|_{L^\infty}\|\pa_x\rho\|_{L^1}}{\rho_m(t)^2}
+\frac{\|\pa_{xx}^2K\|_{L^\infty}\bar{\rho}\|\pa_x\rho\|_{L^\infty}}{\rho_m(t)^3}\\
\leq&~B\|\pa_{xx}^2K\|_{L^\infty}\left(\frac{1}{\rho_m(t)^2}+\frac{\bar{\rho}}{\rho_m(t)^3}\right).
\end{align*}
Then, we obtain the bound on $q$ for all $0\leq t\leq t_1<T$,
\begin{equation}\label{eq:qmaxg}
\|q(\cdot,t)\|_{L^\infty}\leq\|q_0\|_{L^\infty}+B\|\pa_{xx}^2K\|_{L^\infty}\int_0^t\left(\frac{1}{\rho_m(t)^2}+\frac{\bar{\rho}}{\rho_m(t)^3}\right)ds
\leq C'(T)B,
\end{equation}
where the finite constant $C'$ depends on $\|\pa_{xx}^2K\|_{L^\infty},
T$ and initial
data. This implies
\[
|F(x)-F(y)|\leq\|f\|_{L^\infty}|x-y|\leq\rho_M(T) C'(T)B\xi=:C_F(T)B|x-y|.
\]
\end{proof}

\textbf{Acknowledgment.} This work has been partially supported by the NSF grants DMS 1412023 and DMS 1712294.

\bibliographystyle{plain}
\bibliography{singular}

\end{document}